\theoremstyle{thm}
\newtheorem{Theorem}{Theorem}[section]
\newtheorem{mainthm}{Theorem}
\newtheorem*{mainthmA}{Theorem A}
\newtheorem*{mainthmB}{Theorem B}
\newtheorem{Lemma}[Theorem]{Lemma}
\newtheorem{Definition}[Theorem]{Definition}
\newtheorem{defprop}[Theorem]{Definition/Proposition}
\newtheorem{Property}[Theorem]{Property}
\newtheorem{remark}[Theorem]{Remark}
\newtheorem{Example}[Theorem]{Example}
\numberwithin{equation}{section}
\g@addto@macro\bfseries{\boldmath}
\newcommand{\Aff}{\mathbb{A}}	% affine space
\newcommand{\E}{\mathbb{E}}	% Flachs
\newcommand{\R}{\mathbb{R}} % reelle
\newcommand{\Sp}{\mathbb{S}} % Sphere, suspension
\newcommand{\Z}{\mathbb{Z}} % ganze
\newcommand{\A}{\mathcal{A}} % apartments system, atlas
\newcommand{\RS}{\Phi} % root system
\renewcommand{\a}{\alpha}	% half-spaces, half-apartments (roots)
\newcommand{\D}{\Delta}	% simplicial comlpexes, buildings
\newcommand{\g}{\gamma}	% paths, galleries, geodesics
\renewcommand{\S}{\Sigma}	% Coxeter complexes, apartments
\newcommand{\ch}[1]{\mathrm{Ch}(#1)} % chamber set
\newcommand{\id}{\mathrm{id}} % identity
\renewcommand{\int}[1]{\mathrm{int} (#1)}
\newcommand{\rk}{\mathrm{rk}}
\newcommand{\st}[2]{\mathrm{st}_{#1}(#2)} % star
\newcommand{\set}[1]{\left\{#1\right\}} % sets
\newcommand{\lr}[1]{\left\langle#1\right\rangle} % group generated
\newcommand{\fo}{\leq} % face of
\newcommand{\isom}{\cong}
\newcommand\restr[2]{{% we make the whole thing an ordinary symbol
		\left.\kern-\nulldelimiterspace % automatically resize the bar with \right
		#1 % the function
		\vphantom{\big|} % pretend it's a little taller at normal size
		\right|_{#2} % this is the delimiter
}}
\renewcommand{\mod}[2]{{\raisebox{0.0em}{$#1$}\left/\raisebox{-.25em}{$#2$}\right.}}
\begin{document}

%-------------------------------------------------------------------------
% editorial commands: to be inserted by the editorial office
%
%\firstpage{1} \volume{228} \Copyrightyear{2004} \DOI{003-0001}
%
%
%\seriesextra{Just an add-on}
%\seriesextraline{This is the Concrete Title of this Book\br H.E. R and S.T.C. W, Eds.}
%
% for journals:
%
%\firstpage{1}
%\issuenumber{1}
%\Volumeandyear{1 (2004)}
%\Copyrightyear{2004}
%\DOI{003-xxxx-y}
%\Signet
%\commby{inhouse}
%\submitted{March 14, 2003}
%\received{March 16, 2000}
%\revised{June 1, 2000}
%\accepted{July 22, 2000}
%
%
%
%---------------------------------------------------------------------------
%Insert here the title, affiliations and abstract:
%

\title[A structure theorem for euclidean buildings]{A structure theorem for euclidean buildings}

%----------Author 1
\author[Petra Schwer]{Petra Schwer}
\author{David Weniger}
\address{%
Petra Schwer\\
Otto-von-Guericke-Universität Magdeburg\\
Universitätsplatz 2\\
39106 Magdeburg\\
Germany}

\email{petra.schwer@ovgu.de, david.weniger@web.de}

\thanks{We would like to thank the anonymous referee for many helpful comments and suggestions. The first author would like to thank Misha Kapovich for helpful discussions. }

%----------classification, keywords, date
\subjclass{Primary 51E24; Secondary 51D20, 54E35}

\keywords{buildings, reduction, reflection subgroups, thick frame}

\date{January 12th, 2019}

%%% ----------------------------------------------------------------------
\maketitle
%%% ----------------------------------------------------------------------

\begin{abstract}
We prove an affine analog of Scharlau's reduction theorem for spherical buildings. To be a bit more precise let $X$ be a euclidean building with spherical building $\partial X$ at infinity. Then there exists a euclidean building $\bar X$ such that $X$ splits as a product of $\bar X$ with some euclidean $k$-space such that $\partial \bar X$  is the thick reduction of $\partial X$ in the sense of Scharlau. \newline 
In addition we prove a converse statement saying that an embedding of a thick spherical building at infinity extends to an embedding of the euclidean building having the extended spherical building as its boundary.     
\end{abstract}

\section{Introduction}

In his paper  \cite{Tits1977}  on finite reflection groups that appear as Weyl groups  Tits showed that a thick spherical building never is of type $H_3$ or of any type that has $H_3$ as a standard parabolic.
The natural question then is how the non-thick spherical buildings of these types do arise.
In the same paper Tits gave a construction for such (non-thick) buildings of type $H_3$ via subdivided suspensions of (thick) generalized digons, triangles and pentagons (whose diagrams are the maximal proper sub-diagrams of $H_3$) and $H_4$ via subdivided suspensions of thick buildings whose diagrams are the maximal proper sub-diagrams of $H_4$, and wrongly claimed that all (necessarily non-thick) building of type $H_3$ and $H_4$ arise in this way.

%Tit77 3.0 S291
	
Scharlau \cite{Scharlau1987} found a systematic way to describe all those non-thick spherical buildings. The main idea was already proposed by Tits: given a building $\Delta$ one considers equivalence classes of chambers with respect to thin panels.
These thin-classes then form the chambers of a thick spherical building, which was called the \emph{thick frame} of the building $\Delta$ by Scharlau.  We will also refer to the passage from a  building to its thick frame as a \emph{reduction}. This terminology was used by Kleiner and Leeb in Section 3.7 of their work on quasi-isometries between products of symmetric spaces and buildings \cite{KleinerLeeb}. 
	
Scharlau's paper is complemented by Sarah Rees' thesis \cite{Rees1988} which shows how to interpret the statement in the realm of incidence geometries.
Caprace later showed in \cite{Caprace2005} that an analogous result also holds for twin buildings.
Kramer later generalized Scharlau's result and showed in Theorem 3.8 of \cite{Kramer} that the geometric realization of an  $m$-dimensional simplicial sub-complex A of a spherical building which is completely reducible, i.e. in which every vertex has an opposite vertex, is in fact a $k$-fold spherical suspension of a thick spherical building.

One can easily see that such a structure theorem will not hold within the class of simplicial affine buildings. One of the reasons is that some of the spherical Coxeter groups involved do not have a discrete affine counterpart.
One may, however, always consider a non-discrete affine Weyl group which is a semidirect product of a spherical Coxeter group $W_0$ with some $W_0$-invariant translation group $T$.
Replicating the structure theorem to the spherical building at infinity we show that Scharlau's construction canonically extends to euclidean buildings. In fact it does holds for a slightly larger class of generalized affine buildings which allows for non-euclidean metrics when restricted to an apartment. This class is described in \Cref{sec: generalized buildings}.

The precise statements of the main results are as follows.

\begin{mainthm}[Extension]\label{Thm: main thm}
Let $(\bar{X},\bar{\A})$ be a generalized affine building with affine Weyl group $\bar{W}_0 \ltimes \bar{T}$ and suppose that the spherical building $\partial \bar{X}$ of type $\bar{W}_0$ at infinity is thick.   
Then for any spherical Coxeter group $W_0$ of rank $n + k$, with $k\geq 0$, that contains the rank $n$ group $\bar{W}_0$ as a reflection subgroup, there exists a generalized affine building $(X,\A)$ in which $(\bar{X},\bar{\A})$ embeds such that $\partial X$ reduces to $\partial \bar{X}$. I.e. $\partial \bar{X}$ is (as a simplicial complex) isomorphic to the thick frame of the spherical building $\partial X$.  
\end{mainthm}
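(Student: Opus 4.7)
My plan is to construct $X$ as the product $\bar X \times \E^k$ equipped with a refined apartment system coming from $W_0$, and then identify the thick frame of $\partial X$ with $\partial \bar X$. First I would fix a model apartment $\Sigma_0$ of dimension $n+k$ carrying the essential reflection representation of $W_0$, and decompose it as $\Sigma_0 = \bar\Sigma_0 \oplus \E^k$, where $\bar\Sigma_0$ is the $\bar W_0$-invariant subspace identified with the model apartment of $\bar X$. The new translation group $T$ should be taken as the smallest $W_0$-invariant subgroup of translations of $\Sigma_0$ containing $\bar T$ (extended trivially to the $\E^k$-factor); since $W_0$ is finite this is well-defined, and $W_0 \ltimes T$ is then a non-discrete affine Weyl group on $\Sigma_0$ of the type required in \Cref{sec: generalized buildings}.

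Second, I would define the apartments of $X := \bar X \times \E^k$ to be the sets $\Sigma := \bar\Sigma \times \E^k$ for $\bar\Sigma$ an apartment of $\bar X$, each equipped with an atlas of charts into $\Sigma_0$ obtained by combining the charts of $\bar X$ with affine charts of $\E^k$, post-composed with elements of $W_0 \ltimes T$. The embedding $\bar X \hookrightarrow X$ is then simply $x \mapsto (x,0)$. Verification of the building axioms -- existence of an apartment through any two points, and existence of a chart-preserving isomorphism between any two apartments fixing their intersection pointwise -- should reduce to the corresponding axioms for $\bar X$, with the $\E^k$-factor handled by standard affine geometry.

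Finally, I would identify the thick frame of $\partial X$ with $\partial \bar X$. As spherical sets, $\partial(\bar X \times \E^k) = \partial \bar X * S^{k-1}$, and the refined $W_0$-structure subdivides this join so that every $\bar W_0$-chamber of $\partial \bar X * S^{k-1}$ decomposes into several $W_0$-chambers. The extra walls introduced by the refinement -- those coming from reflections in $W_0 \setminus \bar W_0$ -- lie strictly inside these $\bar W_0$-chambers and create thin panels, whereas the $\bar W_0$-walls give thick panels inheriting thickness from $\partial \bar X$. Thin-equivalence classes of $W_0$-chambers therefore coincide with the $\bar W_0$-chambers of $\partial \bar X$, yielding the required isomorphism of thick frames. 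The main obstacle I expect is the second building axiom in the refined setup: apartment isomorphisms in $X$ come from those of $\bar X$ combined with affine maps of $\E^k$, but one must check these respect the finer $W_0 \ltimes T$-chart structure. This should follow from the $W_0$-invariance of $T$, but requires careful bookkeeping when the $\bar W_0 \ltimes \bar T$-structure is strictly coarser than $W_0 \ltimes T$ restricted to $\bar\Sigma_0$.
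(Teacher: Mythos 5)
Your construction is essentially the paper's: $X = \bar X \times \R^k$ with atlas generated by the charts $\bar f \times \id_{\R^k}$ under $W_0 \ltimes T$, the axioms (A1)--(A3), (GG), (CO) verified by reducing to $\bar X$ after normalizing charts to split form, and the thick frame of $\partial X$ identified with $\partial\bar X$ via the subdivided-suspension picture. The only differences are cosmetic: the paper allows any $W_0$-invariant $T$ with $\bar T \oplus \set{0_k} \leq T \leq \R^{n+k}$ rather than the minimal one, and it omits the boundary identification (citing its earlier results), whereas you sketch it --- note only that an extra wall of $W_0$ is a full great subsphere crossing the $\bar W_0$-walls rather than lying inside a single $\bar W_0$-chamber; what matters is that all of its panels are thin.
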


The building $X$ with atlas $\A$ in the statement of the theorem above is constructed as follows.  
Define $X$ to be the set $\bar X \times \R^k$ equipped with the set  
\[\A := \set{ (f \times \id_{\R^k}) \circ w \mid f \in \bar{\A}, w \in W_T}\] 
of charts with an affine Weyl group defined to be $W_T := W_0 \ltimes T$ for some $W_0$-invariant $\bar{T} \oplus {\set{0_k}} \leq T \leq \R^{n + k}$.
We will prove in \Cref{sec: main proof} that the pair $(X,\A)$ is indeed a generalized affine building with affine Weyl group $W_T$ and that $\partial \bar{X}$ is (as a simplicial complex) 
 isomorphic to the thick frame of the spherical building $\partial X$.
Moreover $\partial X$ is independent of the choice of $T$.

An example to keep in mind is the following.

\begin{Example}
Suppose $X$ is the product of two simplicial regular trees of valency $\geq 3$. Then $\partial X$ carries the structure of an $A_1\times A_1$ spherical building. Each apartment is a 1-sphere subdivided into four 1-simplices. 
The space $X$ itself is a square complex whose links are also spherical buildings of type $A_1\times A_1$. As a reducible building $X$ is thick and its apartments are square complexes whose standard geometric realizations are isomorphic to a copy of the euclidean plane. One can now endow $X$ with the structure of a type $\tilde B_2$ building by subdividing each apartment (in a globally compatible way) using additional diagonal hyperplanes so that the resulting simplicial structure is isomorphic to a Coxeter complex of type $\tilde B_2$.   
\end{Example}

Conversely to the extension theorem above one can show the following reduction theorem.

\begin{mainthm}[Reduction]\label{Thm: splitting}
Let $(X,\A)$ be a generalized affine building with affine Weyl group $W_T = W_0 \ltimes T$ with $W_0$ spherical of rank $n$ and suppose the thick frame $\bar{\D}$ of $\partial X$ is of type $\bar{W}$.
Then $X$ splits as a product $X = \bar{X} \times \R^k$ where $\bar{X}$ is a generalized affine building with $\partial \bar{X} = \bar{\D}$ and $k = \rk(\partial X) - \rk(\bar{\D})$.

If  in addition $\bar{T} := T \cap ( \R^{n-k} \oplus \set{0_k} )$ is $\bar{W}$-invariant, then $\bar{X}$ is a generalized affine building with affine Weyl group $\bar{W}_0 \ltimes \bar{T}$ and $X$ can be constructed from $\bar{X}$ as in \Cref{Thm: main thm}.
\end{mainthm}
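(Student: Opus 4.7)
The plan is to transport Scharlau's suspension decomposition of $\partial X$ to a product decomposition of $X$ itself, using the globally well-defined suspension-pole direction. First I would extract the spherical splitting and apply it apartment by apartment. Since $\bar \D$ is the thick frame of $\partial X$, by Scharlau's theorem in the form of Kramer's Theorem 3.8 \cite{Kramer} the spherical building $\partial X$ of type $W_0$ is the $k$-fold spherical suspension of $\bar \D$; consequently $\bar W_0$ fixes a $k$-dimensional linear subspace $V \subset \R^n$ whose orthogonal complement $V^\perp \cong \R^{n-k}$ is the essential subspace of $\bar W_0$. Fix a chart $f \colon \R^n \to A \subset X$ in $\A$ and identify $A$ with $\R^n = V^\perp \oplus V$. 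The walls of $X$ inside $A$, i.e.\ those hyperplanes that separate distinct chambers of the building, come precisely from reflections in $\bar W_0$ (together with compatible translations) and have the form $H' \oplus V$ for an affine hyperplane $H' \subset V^\perp$; fixed hyperplanes of reflections in $W_0 \setminus \bar W_0$ correspond to thin panels at infinity and therefore do not separate chambers of $X$. Thus the actual wall arrangement of $A$ is the pullback under $\R^n \to V^\perp$ of an arrangement on $V^\perp$, and $A$ splits metrically and combinatorially as $V^\perp \oplus V$.

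The main step, which I expect to be the principal obstacle, is to upgrade this apartment-wise splitting to a global product structure. For two apartments $A_1, A_2$ of $X$ with non-empty intersection, I would show that their $V$-factors agree on the overlap. This reduces to the observation that $V$ is intrinsic: the $V$-direction in any apartment consists of those flat directions whose asymptotic endpoints lie in the suspension-pole sub-sphere of $\partial X$, and this subset of $\partial X$ is determined by $\partial X$ alone. Defining an equivalence relation on $X$ by $x \sim y$ if $x,y$ lie in a common apartment and differ only in their $V$-coordinate, one verifies via the building axioms that $\sim$ is transitive. Setting $\bar X := X/{\sim}$ with the induced metric and atlas $\bar \A := \set{\bar f \mid f \in \A}$ where $\bar f(v) := [f(v,0)]$, I would check the axioms of a generalized affine building from \Cref{sec: generalized buildings} and construct a natural isomorphism $X \isom \bar X \times \R^k$ carrying $\A$ onto the product atlas. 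The identification $\partial \bar X \isom \bar \D$ then follows from the suspension picture at infinity.

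For the final assertion, suppose $\bar T := T \cap (V^\perp \oplus \set{0_k})$ is $\bar W$-invariant. Then the transition maps between charts in $\bar \A$ lie in $\bar W_0 \ltimes \bar T$, so $(\bar X, \bar \A)$ is a generalized affine building with affine Weyl group $\bar W_0 \ltimes \bar T$. Feeding $(\bar X, \bar \A)$ together with $W_0$ and $T$ into the construction of \Cref{Thm: main thm} yields the building on $\bar X \times \R^k$ with atlas $\set{(f \times \id_{\R^k}) \circ w \mid f \in \bar \A, w \in W_T}$, and chasing definitions this agrees with $(X, \A)$ under the isomorphism just established.
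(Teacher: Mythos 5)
Your proposal follows essentially the same route as the paper's proof: the apartment-wise orthogonal splitting $V = \bar V \oplus U$ induced by the thick-frame reflection subgroup (the paper's Lemma~\ref{geom splitting}, which you invoke via the suspension picture at infinity), globalized through the bijection between apartments of $X$ and of $\partial X$ and the fact that the admissible transition maps respect this splitting. Your explicit quotient construction of $\bar X$ and your remark that branching of apartments can only occur along walls that are thick at infinity spell out steps the paper leaves implicit, but the underlying argument is the same.
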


One may call $\bar{X}$ the \emph{reduction} (or \emph{soul}, or again \emph{thick frame}) of $X$ having the thick frame of $\partial X$ as its boundary.

\begin{remark}[enrichment of the translation subgroup] \label{Rem: convenient}
	In general, unlike in the spherical case, the constructions in the two main theorems are not mutually inverse,  that is, $(W_0 \ltimes \bar{T}) \cap ( \R^{n - k} \times \set{0_k} ) \neq \bar{T}$.
	The extension construction in most cases substantially enlarges the translation part of the underlying Weyl group.
	This is in particular the case if $k = 0$.
\end{remark}

We carry out the proofs of \Cref{Thm: main thm} and \ref{Thm: splitting} in \Cref{sec: main proof}. They rely on a new axiomatic characterization of (generalized) euclidean buildings due to Curtis Bennet and the first author \cite{Axioms}.
The precise class of generalized affine buildings and their axiomatic description is introduced in \Cref{sec: generalized buildings}, where we also explain  the construction of their spherical building at infinity.
The main feature of this new axiomatic approach is its independence of the choice of a particular Weyl-compatible metric and the fact that the characterization does not involve retractions onto apartments. Both of these features are part of the original definition due to Tits~\cite{Tits1986} as well as the equivalent characterizations due to Parreau~\cite{Parreau}.

It is natural to ask what the relation of the presented reduction theorem to the standard CAT(0) splitting of metric spaces is. This relation is discussed in \Cref{sec: metric pov}.
We proceed below by quickly discussing spherical buildings, their geometric realizations and Scharlau's structure theorem in \Cref{sec: simplicial buildings}.

%%%%%%%%%%%%%%%%%%%%%%%%%%%%%%%%%%%

\section{The reduction theorem for spherical buildings}\label{sec: simplicial buildings}

%CL 1 S523

The main goal of this section is to state and explain Scharlau's reduction theorem for spherical buildings.

%%%%%%%%%%%%%%%%%%%%
\subsection*{Coxeter groups and buildings}

We assume that the reader is familiar with Coxeter systems $(W,S)$ and the structure of their Coxeter complexes $\S(W,S)$.
Put $n=\vert S\vert$ and recall that $W = \lr{S \mid (s_is_j)^{m_{ij}}}$ for a Coxeter matrix $(m_{ij})_{i,j}$ for $1\leq i,j \leq n$.
See Theorem C on page 55 in \cite{Brown} for a proof of the facts provided in \ref{def:reflection action} below. 

\begin{Property}[action on a vector space]\label{def:reflection action}
A Coxeter group $W$ (with the presentation given above) acts by linear transformations on a real vector space $V_W$ with basis $\set{e_1, \dots, e_{n}}$ and preserves the bilinear form $B$ defined by $B(e_i,e_j) := - \cos(\frac{\pi}{m_{ij}})$ if $m_{ij} < \infty$ and $B(e_i,e_j) := -1$ if $m_{ij} = \infty$.
The form $B$ is a scalar product if and only if $W$ is finite.
In this case $W$ acts as a group of orthogonal transformations, and the action is completely determined by its restriction to the unit sphere.
Hence $W$ is in this case called a \emph{spherical} Coxeter group.
\end{Property}

From now on $W$ will always be spherical.

\begin{Property}[cones and the Weyl complex]
The reflection hyperplanes of $W$ in $V$ subdivide $V$ into convex cones.
The set $\S(W,V)$ of cones and their faces is ordered by inclusion and carries the structure of a simplicial complex isomorphic to $\S(W,S)$.
We call $\S(W,V)$ the \emph{Weyl complex} of $(W,S)$ and refer to its elements as \emph{Weyl simplices}.
The intersection with the Weyl simplices induce a simplicial structure on the unit sphere in $V$ called the \emph{geometric realization} of $\S(W,S)$.
Multiplication by $-1$ fixes every reflection hyperplane and hence induces an (involutory) {automorphism} of $\S(W,V)$ or $\S(W,S)$, called the \emph{opposition involution}.
\end{Property}

We are now going to show how inclusions of spherical Coxeter groups determine \emph{geometric inclusions} of their geometric realization. Note that a given finite, real reflection group can be given the structure of  a Coxeter group by choosing a suitable generating set. This choice is not unique. 

\begin{Lemma}[geometric splitting of $V$]\label{geom splitting}
	Let $W$ be spherical, $\rk(W)=n$ and $\bar{W} \leq W$ a reflection subgroup of rank $k$ with generators corresponding to hyperplanes $H_1, \dots, H_k$ in $V$.
	Then $U := \bigcap H_i$ has dimension $n - k \geq 0$ and $V$ splits as the orthogonal sum $V = \bar{V} \oplus U$ with $H_i = \bar{H}_i \oplus U$ for hyperplanes $\bar{H}_i$ in $\bar{V}$.
\end{Lemma}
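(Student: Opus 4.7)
The plan is to define $\bar V$ as the span of unit normals to the hyperplanes $H_1, \dots, H_k$ and take $U$ as its orthogonal complement; the desired splitting will then fall out formally once the dimension count is in place. The key preliminary is that, since $W$ is spherical, the bilinear form $B$ from \Cref{def:reflection action} is a genuine scalar product on $V$, so orthogonality and orthogonal complements are well-defined.

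First I would choose for each $i$ a unit vector $n_i \in V$ normal to $H_i$ with respect to $B$, and set $\bar V := \mathrm{span}(n_1, \dots, n_k)$ together with $U := \bar V^\perp$. Then $V = \bar V \oplus U$ orthogonally, and the identifications $U = \bigcap_i H_i$ and $\bar H_i := H_i \cap \bar V$ are immediate: on the one hand $U \subseteq n_i^\perp = H_i$ for each $i$; on the other hand any $v \in \bigcap_i H_i$ is orthogonal to every $n_i$, hence to $\bar V$, and so lies in $U$. Since $n_i \in \bar V$, the intersection $\bar H_i = H_i \cap \bar V$ is a hyperplane in $\bar V$, and combined with $U \subseteq H_i$ the orthogonal splitting of $V$ restricts to $H_i = \bar H_i \oplus U$.

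The main obstacle is showing the dimension count $\dim \bar V = k$, equivalently that $n_1, \dots, n_k$ are linearly independent. The strategy is to compute the Gram matrix $\bigl(B(n_i, n_j)\bigr)_{i,j}$ and recognize it as the Coxeter form of $\bar W$. Concretely, if $\bar m_{ij}$ denotes the order of $s_i s_j$ in $\bar W$, then $s_i, s_j$ generate a dihedral subgroup of order $2\bar m_{ij}$, and the two reflecting hyperplanes $H_i, H_j$ in $V$ meet at dihedral angle $\pi / \bar m_{ij}$, so that $B(n_i, n_j) = -\cos(\pi / \bar m_{ij})$. This is precisely the matrix attached to $\bar W$ in \Cref{def:reflection action}, which is positive definite because $\bar W$ is itself spherical. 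Positive definiteness of the Gram matrix forces linear independence of the $n_i$, giving $\dim \bar V = k$ and therefore $\dim U = n - k \geq 0$ as claimed.

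A small subtlety I would want to check carefully is the identification of $\bar m_{ij}$ with the angle between $H_i$ and $H_j$ in $V$: the generating reflections of $\bar W$ are elements of $W$, and their product has the same order whether computed in $\bar W$ or in $W$, so the dihedral angles read off from $V$ really do reproduce the Coxeter form of $\bar W$. Once this is verified, everything else is bookkeeping with the orthogonal decomposition.
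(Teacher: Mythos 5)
The paper states \Cref{geom splitting} without proof, so there is no argument of the authors' to compare against; your proposal can only be judged on its own terms. The overall plan is sound: setting $\bar V = \mathrm{span}(n_1,\dots,n_k)$ and $U=\bar V^{\perp}$, the verifications that $U=\bigcap_i H_i$ and that $H_i=\bar H_i\oplus U$ are correct as written, and everything does reduce to the linear independence of the normals.

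The step that needs more care is the Gram matrix computation. The identity $B(n_i,n_j)=-\cos(\pi/\bar m_{ij})$ is not automatic: it requires both a coherent choice of signs for the normals (all pointing out of a common region) and, more importantly, that the hyperplanes $H_1,\dots,H_k$ bound a common chamber of the arrangement of $\bar W$ --- i.e.\ that the given generators form a simple system for $\bar W$, not merely some $k$ reflections generating it. Without that, the identity can genuinely fail: $A_3\cong\Sym_4$ is generated by the three transpositions $(12),(13),(14)$, whose pairwise products all have order $3$, yet no choice of signs makes all off-diagonal Gram entries equal to $-\tfrac12$ (the product of the three off-diagonal entries is forced to be positive, since each sign change multiplies that product by a square); the Gram matrix one actually obtains has all off-diagonal entries $+\tfrac12$ --- still positive definite, but not the Coxeter form of $A_3$. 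So either make the simple-system reading explicit (surely what the paper intends by ``generators'' of a rank-$k$ reflection subgroup, via the canonical generators of a reflection subgroup) and fix the orientation of the normals relative to a chamber of $\bar W$, or bypass the Gram matrix entirely: since the $s_i$ generate $\bar W$, the subspace $\bigcap_i H_i=\mathrm{span}(n_1,\dots,n_k)^{\perp}$ is exactly the fixed space of $\bar W$, whose codimension equals $\rk(\bar W)=k$ for a finite reflection group, giving $\dim\bar V=k$ and hence independence without any case analysis. Finally, the ``subtlety'' you single out --- whether the order of $s_is_j$ is the same in $\bar W$ and $W$ --- is a non-issue, as the order of a group element does not depend on the ambient group; the real subtlety is the one above.
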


Using this splitting lemma one can describe what happens on the level of the spherical Coxeter complex, respectively with its geometric realization.

\begin{remark}[direct sums and spherical joins]
Taking products of a (subdivided) vector space with $\R$ yields a spherical suspension of (a subdivided) $\partial V$ at infinity. To provide some more details we use notation from Lemma~\ref{geom splitting}. One can see that the subgroup $\bar{W}$ of $W$ acts as a reflection group on the subspace $\bar{V}$ of $V$. On the contrary we obtain back the walls with respect to $W$ from the walls in $\bar{V}$ with respect to $\bar{W}$ by forming the direct sum $\bar{V} \oplus \R^{n-k}$ and then subdivide by the additional walls seen by $W$ only.

On the level of the unit sphere (or, similarly, the boundary of the vector space) a direct sum with  $\R^{n-k}$ becomes a \emph{spherical join} with $\Sp^{n-k-1}$ in the sense of Definition 5.13 of \cite{BH}. In particular 
\[
\Sp^{n-1} \isom \underbrace{ \Sp^{0} * \dots * \Sp^{0}}_{\substack{\text{$n$ times}}}.
\]
We call $\Sp^{n-1} * \D$ the $n$-fold \emph{suspension} of $\D$. Be aware that the splitting of $\Sp^{n-1}$ into a product of $n$ zero-spheres is not unique.  

\end{remark}

We take the opportunity to remind the reader that the geometric realization of the \emph{simplicial join} of spherical Coxeter complexes is isometric to the \emph{spherical join} of their geometric realizations and that the same is true for spherical buildings, see for example the discussion in Section~2 of \cite{CharneyLytchak2001} or \cite{BH}.

\begin{remark}[inclusion of Coxeter groups]\label{rem:inclusion}
An inclusion of a Coxeter group $\bar{W}$ into a larger Coxeter group $W$ as a reflection subgroup, that is via an injective map $\iota: \bar{W} \to W$ that sends reflections to reflections, is not uniquely determined.
Such a map corresponds to an identification of reflections in $\bar{W}$ with a set of reflection in $W$ which is stable under itself.

The elements in a conjugacy class of the image of any such monomorphism are also possible inclusions of $\bar{W}$ into $W$ preserving reflections, because conjugation preserves the Coxeter relations.

In general, two such embeddings may not be conjugate, for example the embeddings
$A_1 \times A_1 \hookrightarrow C_2$ as long or short roots respectively are not conjugate.

Two examples of such embeddings are shown in \Cref{fig:inclusion}, where the reflection hyperplanes corresponding to the smaller group $\bar{W}$ are shown in bold (red). 
\end{remark}

\begin{figure}[H]
	\centering
	\includegraphics[width=0.35\textwidth]{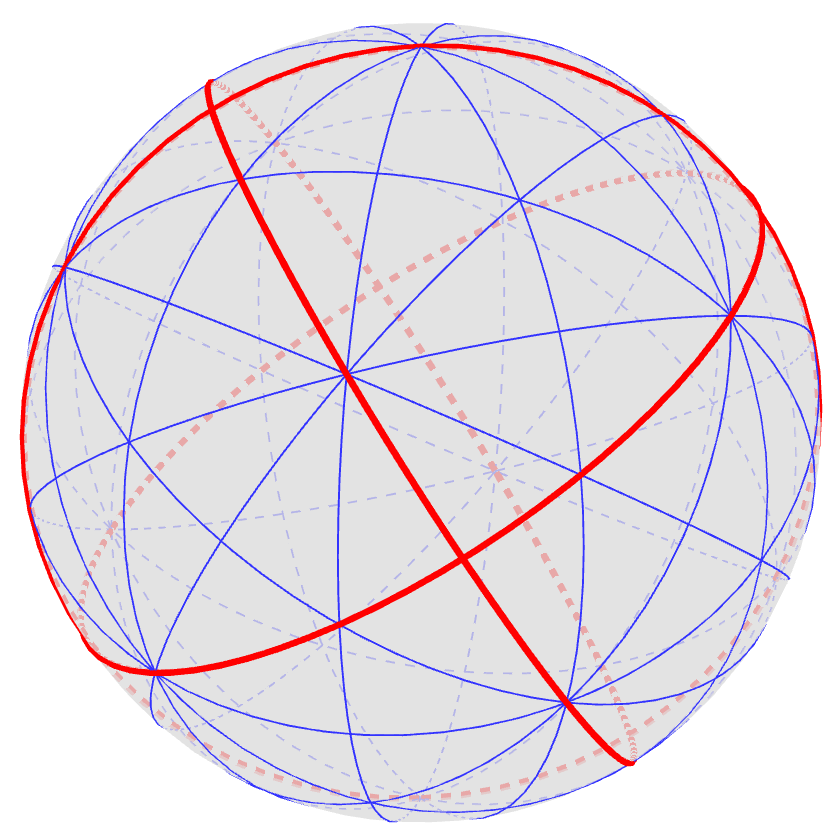}%[scale=0.15]{H3-A1A1A1}
	\includegraphics[width=0.35\textwidth]{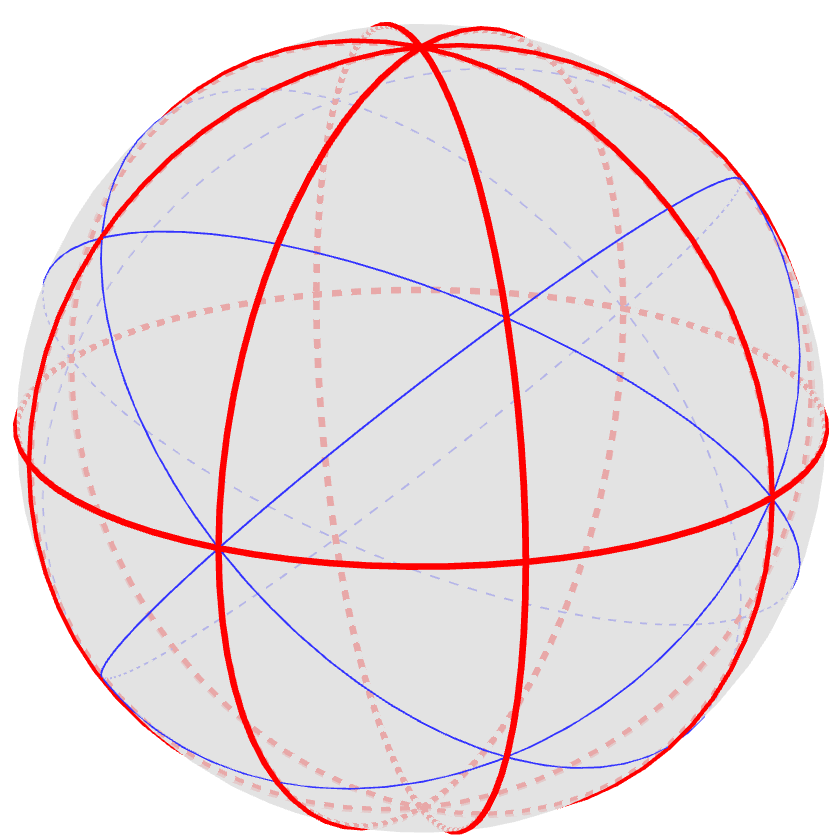}
	\caption{An inclusion of $A_1 \times A_1 \times A_1$ into $H_3$ (left) and an inclusion of $C_2\times A_1$ into $C_3$ (right). The reducible subgroups correspond to the bold red hyperplanes, the ambient groups to bold red and thin blue hyperplanes.  See also  Remark~\ref{rem:inclusion}. }
	\label{fig:inclusion}
\end{figure}

%%%%%%%%%%%%%%%%%%%%%%%%%
\subsection*{Thick and thin spherical buildings}

We continue by recalling some building terminology. All spherical buildings below are simplicial buildings in the sense of the definition provided in Chapter IV Section 1 of \cite{Brown}. The class of affine buildings we are considering will be  defined in Section~\ref{sec: generalized buildings}.  

In general a (simplicial) building is a simplicial complex $\D$ which is the union of a collection $\A$ of \emph{apartments}, each of which is isomorphic to some (fixed) Coxeter complex. Moreover any building has the properties that every pair of simplices $a,b \in \D$ is contained in a common apartment and that if there are two apartments containing given simplices $a$ and $b$ then there exists an isomorphism between these apartments fixing $a$ and $b$ pointwise.
A building is \emph{spherical} if the corresponding Coxeter group is \emph{spherical}.

It is easy to see that all apartments in a building must be of the same Coxeter type, which we call the \emph{type of the building}. The maximal simplices in a building $\D$ are all of the same dimension and will be called \emph{chambers}, their codimension one faces are called \emph{panels}.

\begin{Definition}[thickness]\label{Def: thick}
A panel $p$ in a building is \emph{thick} if it is contained in at least three chambers. A panel is \emph{thin} if it is contained in exactly two chambers.  A building is \emph{thick} (or \emph{thin}) if all its panels are thick (or thin). 
\end{Definition}

\begin{remark}[weakness vs non-thickness]
Scharlau calls a building weak in \cite{Scharlau1987} if it is not thick. We will simply call a building which does have at least one thin panel a \emph{non-thick} building.

Such a building is then necessarily non-degenerate, because its rank is at least one.

Similarly a building with at least one thick panel is \emph{non-thin}.
\end{remark}

%%%%%%%%%%%%%%%%
\subsection*{Scharlau's theorem}

We will now explain the main ingredients of Scharlau's proof which are summarized in the following two Lemmata. They allow to distinguish between thick and thin walls in a building and imply that there is a subgroup of the Weyl group generated by reflections in thick walls.

The following is Lemma~1 in \cite{Scharlau1987}. 
\begin{Lemma}[thick and thin walls]\label{Lem: walls}
	Let $\D$ be a spherical building and $M$ a wall in $\D$.
	Then the panels contained in $M$ are either all thick or all thin.
\end{Lemma}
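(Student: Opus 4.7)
The plan is to reduce the lemma to a rank-two statement about generalized polygons via passage to links, and then to settle that case by a direct analysis.

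First I would pick two panels $p_1, p_2 \subset M$ and recall that $M$ itself carries the structure of a spherical Coxeter complex of rank one less than the ambient apartment $\Sigma$, whose chambers are exactly the panels of $\Sigma$ lying on $M$. Since this chamber system is connected, $p_1$ and $p_2$ can be joined by a sequence of panels of $\Sigma$ in $M$ in which consecutive members share a codimension-two face, so it suffices to establish the lemma when $p_1$ and $p_2$ share a codimension-two simplex $q \subset M$.

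Next I would pass to the link of $q$ in $\D$, which is a spherical rank-two building, i.e.\ a generalized $m$-gon for some $m$ determined by the Coxeter exponent governing the two panels through $q$. The link of $q$ in $\Sigma$ is a $2m$-apartment of this polygon whose intersection with $M$ is a zero-sphere, so $p_1$ and $p_2$ appear as antipodal vertices of the $2m$-apartment. Since the number of chambers of $\D$ containing $p_i$ equals the valency of $p_i$ in the link, the lemma reduces to showing that two opposite vertices of a generalized $m$-gon are either both thick or both thin.

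When $m$ is even, opposite vertices of the $2m$-apartment lie on the same side of the bipartition of the generalized polygon, and so have the same valency by the regularity of valencies on each side. The main obstacle is the case $m$ odd, in which the two opposite vertices are of different types. Here my plan is to argue by double counting: enumerating vertices at each successive distance from a vertex on the thin side of the polygon, using the girth $2m$ and diameter $m$ of the bipartite incidence graph, and comparing with the same enumeration from a vertex on the opposite side, one forces the opposite valency also to equal two. This settles the rank-two case and, via the reduction above, the lemma.
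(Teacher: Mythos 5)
Your reduction is exactly the one the paper uses: the wall is a connected chamber complex, so it suffices to compare two panels of $M$ sharing a codimension-two face $q$, and in the link of $q$ (a generalized $m$-gon) these two panels become opposite vertices, so everything comes down to showing that opposite elements of a generalized $m$-gon have equal valency. The gap is in how you settle that rank-two statement. For $m$ even you invoke ``the regularity of valencies on each side of the bipartition'', but this is false for weak (non-thick) generalized polygons, which is precisely the situation the lemma is about: in an $a \times b$ grid with $a \neq b$ (a weak generalized quadrangle) rows and columns are elements of the same type, yet rows carry $b$ points and columns carry $a$ points. Opposite lines in a grid do have equal valency, but only because opposite lines are parallel, not because of any type-regularity, so your stated reason does not apply. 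For $m$ odd you only sketch a distance-enumeration; as stated it does not close, because the number of elements at distance $k$ from a given vertex depends on the valencies of all intermediate vertices, which is exactly the information one lacks in a weak polygon.

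The uniform, parity-independent fix is the projection (perspectivity) argument, which is what the paper cites via Tits, Thm.\ 2.40: if $x$ and $y$ are opposite, then every $z$ incident with $x$ lies at distance $m-1$ from $y$, and since the girth of the incidence graph is $2m$ the geodesic from $z$ to $y$ is unique; sending $z$ to the penultimate vertex of that geodesic defines a map from the residue of $x$ to the residue of $y$ whose inverse is the analogous projection back onto $x$. This bijection shows opposite elements of any generalized $m$-gon, thick or weak, have equal valency, and your reduction then finishes the proof. So the architecture of your argument is sound and coincides with the paper's; only the rank-two kernel needs to be replaced.
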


\begin{proof}
	Fix an apartment $\S$ that has $M$ as a wall and let $p,p'$ be distinct panels in $M$.
	Choose a minimal gallery of panels $p = p_0, \dots, p_n = p'$ in $M$.
	This is possible as walls are themselves chamber complexes as e.g. shown in Lemma 4.1 of \cite{Caprace2005}. % as in \Cref{Cc walls are chamber complexes}.
	Consider $a = p \cap p_1$.		
	Since both $p$ and $p_1$ are in $M$ they must be opposite in $\st{\S}{a}$, which is a generalized polygon. A standard result of Tits \cite[Thm. 2.40]{Tits1974} then implies that the stars along the gallery are all isomorphic, i.e.  
	\[
	\st{\D}{p} \isom \st{\D}{p_1} \isom \dots \isom \st{\D}{p'}. % by \Cref{Thm: sCc op stars iso}.
	\] Hence the lemma follows. 
\end{proof}

Once we have identified thick and thin walls in an apartment we may look at the reflection subgroup of the Weyl group generated by the reflections in thick walls only. In fact the set of reflections in thick walls equals the set of all reflections in the group they generate. So in particular thickness is invariant under reflection. 

This follows from the next lemma, which is Lemma 2 of \cite{Scharlau1987}.
 
\begin{Lemma}[thickness is reflection invariant]\label{Lem: sB thick reflection}
	Let $\S$ be an apartment in a spherical building, let $p$ be a thick panel in $\S$, $\a$ and $-\a$ the half-apartments determined by $p$, and let $\varphi$ be the folding onto $\a$.
	Then $\varphi$ maps thick panels to thick panels and thin panels to thin panels.
\end{Lemma}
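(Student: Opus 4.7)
The plan is a three-step reduction: from the folding to a single reflection (only the non-fixed panels matter), from panels to walls via \Cref{Lem: walls}, and finally from walls to a pair of panels sitting in a common codimension-$2$ link, where everything reduces to a coloring statement in a generalized polygon.

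First, $\varphi$ fixes $\a$ pointwise, so only panels $q \subset -\a \setminus M_p$ need attention; for these $\varphi(q) = s_p(q) =: q'$, where $s_p$ is the reflection in the wall $M_p$ of $p$. By \Cref{Lem: walls}, $q$ is thick iff the wall $M_q$ of $\S$ containing it is thick, and similarly for $q'$ and $M_{q'} = s_p(M_q)$. If $M_q$ is $s_p$-invariant the two walls coincide and there is nothing to prove, so I may assume $M_p \neq M_q$. Since $\S$ is spherical, $M_p \cap M_q$ has codimension $2$ in $\S$ and contains a codim-$2$ simplex $a$; $s_p$-invariance of $M_p$ then gives $a \subset M_p \cap M_{q'}$ automatically. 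I then pick a panel $r \subset M_q$ containing $a$ and set $r' := s_p(r) \subset M_{q'}$. One more application of \Cref{Lem: walls} reduces the entire statement to showing that $r$ is thick iff $r'$ is thick.

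For that final step I would pass to the link $\lk{\D}{a}$, a rank-$2$ spherical building, i.e.\ a generalized $m$-gon, whose apartment $\lk{\S}{a}$ is a $2m$-gon with vertices alternating in two colors. Panels of $\D$ containing $a$ correspond to vertices of this polygon, and the thickness of such a panel equals the valency of the corresponding vertex; a standard fact about generalized polygons says that vertices of one color share a common valency. The reflection $s_p$ fixes $a$ pointwise, so it induces an involution of $\lk{\S}{a}$; this involution fixes the vertex corresponding to $p$ and hence must be the reflection of the $2m$-gon through that vertex. Such a vertex-reflection preserves the alternating coloring, so $r$ and $r' = s_p(r)$ are vertices of the same color, and therefore have the same valency and the same thickness.

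The main technical point I expect to have to argue carefully is the identification of the induced involution on $\lk{\S}{a}$ as a vertex-reflection rather than an edge-reflection; this is what forces the coloring, and it rests on $p$ being the unique $s_p$-fixed vertex of $\lk{\S}{a}$ contained in $M_p$. Everything else follows cleanly from \Cref{Lem: walls} and the standard structure of rank-$2$ spherical buildings.
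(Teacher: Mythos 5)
Your reduction steps are sound as far as they go --- restricting attention to panels $q\subset -\a\setminus M_p$, converting thickness of panels into thickness of walls via \Cref{Lem: walls}, and passing to the link of a codimension-$2$ face $a$ of $M_p\cap M_q$ --- but observe that they merely reduce the lemma to its own rank-$2$ instance (when $\D$ itself has rank $2$ the passage to $\lk{\D}{a}$ is vacuous). The weight of the proof then rests entirely on the assertion that in a generalized polygon vertices of one color share a common valency, and that is where the argument breaks: the fact is true for \emph{thick} generalized polygons, which admit an order, but it fails for exactly the weak ones that occur here. Concretely, subdividing every edge of the complete bipartite graph $K_{2,3}$ yields a weak generalized quadrangle whose five ``element'' vertices all carry the same color, yet two of them have valency $3$ and three have valency $2$. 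A telling symptom is that your argument never uses the hypothesis that $p$ is thick, whereas the statement is false without it: in an octagonal apartment of that quadrangle, the reflection through a thin ``midpoint'' vertex interchanges a thick vertex with a thin one of the same color.

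To close the gap you must make the thickness of $p$ do some work. The standard argument (this is Lemma~2 of Scharlau, which the paper cites without reproving) uses a third chamber at $p$ to produce a half-apartment $\beta$ with $\partial\beta=M_p$ and $\beta\cap\S=M_p$, so that $\a\cup\beta$ and $(-\a)\cup\beta$ are again apartments; the isomorphisms furnished by the building axioms, fixing $\a$, $\beta$ and $-\a$ respectively, compose to the reflection $s_p$ of $\S$, and each factor preserves thickness because a wall and its image share a half-wall containing panels, so \Cref{Lem: walls} applies inside each of the three apartments. You may keep your link-of-$a$ reduction if you like, but the resulting rank-$2$ statement must then be proved by an argument of this kind rather than by the coloring claim.
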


We are now going to ``glue'' chambers together to form chambers with only thick panels.

\begin{Definition}[thin-classes]\label{Def: thin-class}
Two chambers $c$ and $c'$ in a spherical building $\D$ are \emph{thin-adjacent} if $c \cap c'$ is a thin panel.
A gallery $\g = (c_0, \dots, c_n)$ is called \emph{thin} if $c_{i-1}$ and $c_{i}$ are thin-adjacent for any $i = 1, \dots, n$, and we say that $c$ and $c'$ are \emph{thin-equivalent} if they can be joined by a thin gallery.
It is easy to see that thin-equivalence is in fact an equivalence relation on $\ch{\D}$, and we refer to the corresponding equivalence classes as \emph{thin-classes}.
For a chamber $c$ we denote its thin-class by $\bar{c}$.
\end{Definition}

One immediately  has the following properties. 
\begin{Lemma}[properties of thin-classes]
For every spherical building the following hold. 
\begin{enumerate}
\item  If two chambers $c$ and $c'$ do not belong to the same thin-class, then every gallery from $c$ to $c'$ crosses a thick wall.
\item  Thin-classes are convex. 
\item  If an apartment $\S$ contains a chamber $c$, then $\bar{c} \subset \S$.
\end{enumerate}
\end{Lemma}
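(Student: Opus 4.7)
The plan is to settle the three properties in sequence, using \Cref{Lem: walls} as the one nontrivial external input.

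Part (1) is handled by the contrapositive: suppose a gallery $\g = (c = c_0, c_1, \ldots, c_n = c')$ crosses no thick wall. For each $i$, the shared panel $p_i = c_{i-1} \cap c_i$ lies in a wall which, by assumption, is thin; by \Cref{Lem: walls} every panel of a thin wall is thin, so $p_i$ itself is thin, and $\g$ is a thin gallery. Thus $c \sim c'$, which is the contrapositive of the claim.

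For part (2), I would show that any minimal gallery $\g$ between chambers $c, c' \in \bar{c}$ is itself thin. This gives convexity, since then each intermediate chamber is thin-adjacent to its predecessor and so thin-equivalent to $c$. Suppose, for contradiction, that some shared panel of $\g$ lies in a thick wall $H$; by \Cref{Lem: walls} every panel of $H$ is thick, and since minimal galleries in a spherical building cross each wall at most once, $\g$ crosses $H$ exactly once, placing $c$ and $c'$ on opposite sides of $H$. Any thin gallery $\g'$ from $c$ to $c'$ must therefore cross $H$ an odd (in particular nonzero) number of times, and each such crossing occurs at a shared panel lying in $H$, hence a thick panel — contradicting the thinness of $\g'$. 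The main obstacle is the global notion of ``side of a wall'' for chambers outside a single apartment, which is standard in spherical building theory but worth flagging (see, e.g., \cite{Brown, Tits1974}).

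Part (3) is an induction along a thin gallery $c = c_0, c_1, \ldots, c_n = c'$. The base case $c_0 = c \in \S$ is given. For the inductive step, assume $c_{i-1} \in \S$; then the thin panel $p = c_{i-1} \cap c_i$ lies in $\S$ as a face of $c_{i-1}$. Since $\S$ is an apartment (hence thin), there is a unique chamber $d \in \S$ distinct from $c_{i-1}$ with $p \subset d$. But $p$ is thin in $\D$, so exactly two chambers of $\D$ contain $p$ — namely $c_{i-1}$ and $c_i$ — forcing $d = c_i$ and hence $c_i \in \S$. The induction terminates with $c' \in \S$, completing the argument.
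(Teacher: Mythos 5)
Your three arguments are correct; note that the paper itself offers no proof here (it states the lemma as immediate), so there is nothing to compare against beyond checking your reasoning, which holds up. Part (1) is the right contrapositive use of \Cref{Lem: walls}, and part (3) is a clean induction using only that a thin panel has exactly two chambers in the whole building while an apartment already supplies two. The one place you rightly flag a gap is in part (2): the ``side of a thick wall'' and the odd-crossing count for an arbitrary gallery in a thick building require either retractions or Weyl-distance bookkeeping, and it is not completely free that a crossing of $H$ by a gallery leaving the apartment happens at a panel \emph{of} $H$. You can sidestep this entirely by reordering: prove (3) first, then for (2) fix an apartment $\S$ containing both $c$ and $c'$; by (3) the whole thin-class, and in particular some thin gallery from $c$ to $c'$, lies in $\S$, and any minimal gallery between them also lies in $\S$ by convexity of apartments. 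Inside the single Coxeter complex $\S$ the separating walls of $c$ and $c'$ are exactly the walls crossed by the minimal gallery, each crossed an odd number of times by the thin gallery, so by \Cref{Lem: walls} they are all thin walls and every panel of the minimal gallery is thin. This keeps the entire argument at the level of elementary Coxeter-complex combinatorics and removes the only nonroutine step in your write-up.
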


Geometrically the thin-classes are the (closures of the) connected components after having removed the thick walls.
So for every apartment $\S$ in a spherical building $\D$ the group $\bar W$ generated by reflections in thick walls is a Coxeter group (by \Cref{Lem: sB thick reflection}) which acts simply-transitively on the set of thin-classes.
This defines an apartment $\bar{\S}$ by ``forgetting'' some of the structure in $\S$.
Overall we obtain a building $\bar{\D}$ as the union of all apartments $\bar{\S}$, where $\S$ is an apartment in $\D$.
We summarize this in the following Definition/Proposition. 

\begin{defprop}[reduction/thick frame]
For a spherical building $\D$ the set of thin-classes in $\D$ is the set of chambers of a chamber complex $\bar{\D}$ called the \emph{reduction} or \emph{thick frame of $\D$}. By construction $\bar{\D}$ is a union of apartments.  
\end{defprop}

It is easy to see that the thick frame of a thick building is the building itself. 

We are now ready to state Scharlau's structure theorems as shown in \cite{Scharlau1987}. 
%Sch S80

\begin{Theorem}[spherical reduction]\label{Thm: Scharlau}
The reduction (or thick frame) $\bar{\D}$  of a spherical building $\D$ of type $W$ is a thick spherical building of type $\bar W$. Two chambers of $\bar{\D}$ are adjacent if and only if they have adjacent representatives in $\D$.
The apartments of $\bar{\D}$ are in one-to-one correspondence with the apartments of $\D$.
The Coxeter group $\bar W$ associated to an apartment $\bar{\S}$ in $\bar \D$  is the subgroup of $W$ generated by the reflections along thick walls of an apartment $\S$ in $\D$, and this way the Weyl group $W(\bar{\D})$ becomes a subgroup of the Weyl group $W(\D)$, which is determined up to conjugation.
\end{Theorem}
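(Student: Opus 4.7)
The plan is to verify the building axioms for $\bar{\D}$ apartment-by-apartment, then check thickness, adjacency, and the apartment correspondence, leveraging \Cref{Lem: walls} and \Cref{Lem: sB thick reflection}. First I would fix an apartment $\S \subseteq \D$ and note that by \Cref{Lem: walls} thickness is a property of a wall rather than of an individual panel, so the collection of thick walls in $\S$ is well-defined. Combined with \Cref{Lem: sB thick reflection}, the set of reflections along thick walls in $\S$ is closed under conjugation by itself and thus consists of all reflections of the subgroup $\bar W$ of $W$ it generates. Since $W$ is finite, so is $\bar W$, and $\bar W$ acts as a finite real reflection group on $V_W$. The thin-classes in $\S$ are precisely the closures of the connected components of $\S$ after removing the thick walls; these are the Weyl simplices of $\bar W$. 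By the standard simply-transitive action of a finite reflection group on its Weyl chambers, $\bar \S$ carries the structure of the Coxeter complex $\S(\bar W,\bar S)$ for a suitable choice of fundamental reflections $\bar S$.

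Next I would verify the two building axioms for $\bar \D$. Given two chambers $\bar c, \bar c' \in \bar \D$ choose representatives $c,c' \in \D$ and a common apartment $\S$; by property (iii) of thin-classes, $\bar c$ and $\bar c'$ lie entirely in $\S$, hence in $\bar \S$. For the isomorphism axiom, an isomorphism $\f: \S \to \S'$ in $\D$ fixing subcomplexes $a,b$ pointwise maps thick walls to thick walls (thickness being an intrinsic property of stars of panels) and therefore descends to a well-defined simplicial isomorphism $\bar \f: \bar \S \to \bar \S'$ fixing $\bar a, \bar b$.

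Thickness and adjacency I would handle together. A panel $\bar p$ of $\bar \D$ lying between two distinct thin-classes corresponds to a thick wall $M$ in some containing apartment; picking any thick panel $p \subseteq M$, the three or more chambers meeting at $p$ lie in distinct thin-classes (since across any thick panel two chambers are separated by a thick wall and hence by property (i) of thin-classes land in different classes) and all have $\bar p$ as a face. Conversely two chambers of $\bar \D$ are adjacent in $\bar \D$ exactly when they admit representatives in $\D$ adjacent across a thick panel, which gives statement two of the theorem. The bijection of apartments is then immediate: every $\bar \S$ arises from an $\S$ by construction, and $\S$ is recovered from $\bar \S$ as the union of its thin-classes. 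Finally, any two apartments of $\D$ are related by a type-preserving isomorphism of $\D$ which by the above preserves the thick-wall structure, so the induced subgroups $\bar W \leq W$ in the respective apartment Weyl groups are conjugate.

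The hard part is really the global consistency: one must ensure that the ``thickness'' data determined inside each apartment assembles to a coherent simplicial structure on the whole reduction. This is precisely where \Cref{Lem: walls} (thickness is a wall property) and the convexity of thin-classes together with property (iii) come into play; without them one could not safely paste the per-apartment constructions into one chamber complex $\bar \D$. Everything else is essentially bookkeeping once $\bar \S$ has been identified with a Coxeter complex of $\bar W$.
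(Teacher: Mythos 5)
Your proposal is correct and follows essentially the same route as the paper, which does not reprove the theorem but explicitly presents Lemma~\ref{Lem: walls}, Lemma~\ref{Lem: sB thick reflection} and the properties of thin-classes as the main ingredients of Scharlau's argument; you assemble exactly these ingredients in the expected way (thin-classes as closures of components of the complement of the thick walls, hence $\bar W$-chambers, then the building axioms, thickness, and the apartment bijection). The only cosmetic quibble is that ``two chambers adjacent across a thick panel lie in distinct thin-classes'' follows from the definition of thin-equivalence together with Lemma~\ref{Lem: walls} (or from your geometric identification of thin-classes), not literally from property (i), but this does not affect the argument.
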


\begin{remark}[the degenerate or thin case]\label{rem:deg-spher}
	The thick frame of a thin building i.e. a Coxeter complex is $\{\emptyset\}$.
	This is the Coxeter complex corresponding to the Coxeter system of type $(\{\id\}, \emptyset)$. Its only chamber is $\emptyset$ which has no faces of co-dimension one and is therefore trivially thick (and thin).  This fact was already commented on by Caprace.
	See (1.4) Remark 5. in \cite{Caprace2005}. 
	So we may view a single Coxeter complex of rank $k$ as "split" into a thick building of type $(\{\id \},\emptyset)$ and an $\Sp^{k-1}$.  
\end{remark}

% David says that the next theorem is a converse of 2.5 

\begin{Theorem}[subdivided suspension]
If $\bar{\D}$ is a thick spherical building of rank $n$ and $W$ some spherical Coxeter group of rank $n +k $ containing $W(\bar{\D})$ as a reflection subgroup, then subdividing the $k$-fold suspension $\Sp^{k - 1} \ast \bar{\D}$ by the walls of $W$ defines a non-thick building $\D$ with thick frame $\bar{\D}$.
This subdivision only depends on the conjugacy-class of the inclusion $W(\bar{\D}) \hookrightarrow W$.
\end{Theorem}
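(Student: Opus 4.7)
The plan is to explicitly construct $\D$ apartment by apartment, verify the simplicial building axioms, and identify its thick frame with $\bar{\D}$; conjugacy invariance then follows from the canonical nature of the construction.

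First I set up the apartments. Applying \Cref{geom splitting} to the inclusion $\bar{W} \hookrightarrow W$ yields an orthogonal decomposition $V_W = \bar{V} \oplus U$ with $\dim U = k$, under which every reflection hyperplane of $\bar{W}$ takes the form $\bar H \oplus U$; the unit sphere in $V_W$ is the spherical join $\Sp^{k-1}_U \ast \Sp^{n-1}_{\bar V}$. For each apartment $\bar{\S}$ of $\bar{\D}$, identified with $\Sp^{n-1}_{\bar V}$ together with its $\bar{W}$-subdivision, I form $\S := \Sp^{k-1}_U \ast \bar{\S}$ and further subdivide by those reflection hyperplanes of $W$ not coming from $\bar{W}$, obtaining a Coxeter complex of type $W$. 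The building $\D$ is then the $k$-fold suspension $\Sp^{k-1} \ast \bar{\D}$ equipped with this simplicial structure on each $\S$, and $\A$ consists of all such $\S$.

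Next I verify the simplicial building axioms. Each $\S \in \A$ is a Coxeter complex of type $W$ by construction. Given two chambers $c, c'$ of $\D$, the join projection sends them to chambers $\bar c, \bar c'$ of $\bar{\D}$; any apartment $\bar{\S}$ of $\bar{\D}$ containing $\bar c, \bar c'$ lifts to an $\S$ containing both $c$ and $c'$. Given two apartments $\S, \S'$ of $\D$ lying over $\bar{\S}, \bar{\S}'$, the building axioms for $\bar{\D}$ provide an isomorphism $\bar f\colon \bar{\S} \to \bar{\S}'$ fixing $\bar{\S} \cap \bar{\S}'$ pointwise; extending by the identity on the $\Sp^{k-1}_U$ factor gives an isomorphism $f\colon \S \to \S'$ that respects the $W$-subdivision, canonically determined by $V_W = \bar V \oplus U$, and fixes $\S \cap \S'$ pointwise.

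To identify the thick frame, I classify the walls. A wall $M$ of $\S$ is the fixed hyperplane of some reflection $r \in W$. If $r \in \bar{W}$, then $M = \bar H \oplus U$ for a wall $\bar H$ of $\bar{\S}$, and thickness of $\bar H$ in $\bar{\D}$ produces at least three half-apartments of $\bar{\D}$ bounded by $\bar H$; their lifts to $\D$ share $M$, contributing at least three distinct chambers at any panel on $M$, so by \Cref{Lem: walls} every panel on $M$ is thick. If $r \notin \bar{W}$, any two $\D$-apartments $\S, \S'$ sharing a panel $p$ on $M$ project to apartments $\bar{\S}, \bar{\S}'$ of $\bar{\D}$ whose intersection is bounded only by $\bar{W}$-walls, so the two chambers of $\S$ adjacent to $p$ coincide with those of $\S'$; hence $M$ is thin by \Cref{Lem: walls}. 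Consequently the thick walls of $\D$ are exactly those coming from $\bar{W}$, the thin-classes are in bijection with the chambers of $\bar{\D}$, and the induced map on thick frames is a simplicial isomorphism with $\bar{\D}$.

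If $\iota, \iota'\colon \bar{W} \hookrightarrow W$ are conjugate by some $w \in W$, then $w$ carries the hyperplane arrangement of $\iota(\bar{W})$ to that of $\iota'(\bar{W})$ and induces an isomorphism of the resulting subdivisions of $\Sp^{k-1} \ast \bar{\D}$. I expect the main obstacle to lie in the thin-wall step: verifying that the chambers adjacent to a panel on a non-$\bar{W}$ wall agree across all lifting apartments rests on the canonical nature of both the inter-apartment intersections in $\D$ (inherited from $\bar{\D}$) and the $W$-subdivision (determined by $V_W = \bar V \oplus U$), which requires a careful combinatorial check.
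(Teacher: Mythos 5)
The paper does not actually prove this statement: it is quoted as Scharlau's second structure theorem and attributed to \cite{Scharlau1987}, so there is no in-paper argument to measure your proposal against. That said, your construction is the standard one and is essentially sound. The apartment-by-apartment definition via \Cref{geom splitting}, the verification of the building axioms by projecting chambers to $\bar{\D}$ and lifting apartments, and the wall classification (walls coming from $\bar{W}$-reflections are thick because $\bar{\D}$ is; the remaining walls are thin because two apartments sharing a panel $p$ on such a wall intersect in the full join $\Sp^{k-1}_U \ast (\bar{\S}\cap\bar{\S}')$, which contains the entire cell $\Sp^{k-1}_U \ast \bar{c}$ in which $p$ and both of its neighbouring chambers live) all go through, and \Cref{Lem: walls} together with \Cref{Thm: Scharlau} then identifies the thick frame with $\bar{\D}$.

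The point you flag at the end is the right one to worry about, but it sits slightly upstream of where you locate it: before one can speak of \emph{the} $W$-subdivision of $\Sp^{k-1}\ast\bar{\D}$, one must check that the subdivisions induced on overlapping apartments agree, i.e.\ that the transition maps of $\bar{\D}$ extend to maps preserving the wall arrangement of $W$. This holds because the isomorphism $\bar{\S}\to\bar{\S}'$ fixing $\bar{\S}\cap\bar{\S}'$ pointwise may be taken type-preserving, hence is realized by an element of $\bar{W}\leq W$ acting as $\bar{w}\oplus\id_U$ on $\bar{V}\oplus U$, which permutes the walls of $W$; a non-type-preserving identification need not do so. This is also exactly where the dependence on the conjugacy class of the embedding $\bar{W}\hookrightarrow W$ enters, since non-conjugate embeddings place the extra walls differently (cf.\ \Cref{rem:inclusion}). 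With that observation supplied, your argument is complete.
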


%%%%%%%%%%%%%%%%%%%%%%%
\section{Generalized buildings}\label{sec: generalized buildings}

In this section we introduce the class of generalized affine buildings for which we will prove a natural extension of Scharlau's construction.  

We follow \cite{Axioms} but will only consider generalized affine buildings with apartments isomorphic to $\R^n$ and start with a definition of  generalized Weyl groups that will be considered on $\R^n$. 

\begin{Definition}[(generalized) affine Weyl group]\label{Def: affine Weyl group}
Let $W_0$ be a spherical Coxeter group and denote by $\RS$ its root system. 
Define $\Aff$ to be the $\R$-span of $\RS$. Then $W_0$ acts on $\Aff$ by naturally extending its action on $\RS$. 
Suppose $T$ is a $W_0$-invariant translation subgroup of the automorphism group of the affine space underlying $\Aff$. Then $W_T := W_0 \ltimes T$ is called an \emph{affine Weyl group} and the images of Weyl simplices under $W_T$ are called \emph{(affine) Weyl simplices}.
The top-dimensional affine Weyl simplices will be called \emph{Weyl chambers}.
\end{Definition}

An element of $W_T$ is an \emph{affine reflection} if it can be written as a pair of a translation $t$ and a some reflection $r_\alpha$ in $W_0$. For an explicit description of affine reflections see Definition 4.7 and Remark 4.8 in \cite{Thesis}. 

The fixed point set $H_{w}$ of an affine reflection $w$ is called an \emph{ affine hyperplane}. Any affine hyperplane  splits $\Aff$ into two half spaces. We say that $H_{w}$ or $w$ \emph{separates} points $x$ and $y$ in $\Aff$ if $x$ and $y$ are contained in different open half spaces with respect to $H_{w}$.

Affine Weyl simplices are the translates of Weyl simplices under $T$ and thus are closed, convex subsets of $V$.
If $W_0$ is of rank $n$, then $V \isom \R^n$.

We define below the notion of a Weyl--compatible metric which will be used to define a neighborhood in \Cref{Def: germ}.

\begin{Definition}[Weyl--compatible metric]\label{Def: Weyl compatible metric}
	A metric $d$ on $\Aff$ is called \emph{Weyl--compatible} if it is $W_T$-invariant and satisfies the following conditions
	\begin{labeling}{loong}
	\item[(1)] Let $S$ be any Weyl chamber and let $P,Q$ be Weyl simplices of the same type with sub-faces $P' \fo P$ and $Q' \fo Q$. If there exists a uniform constant $\lambda > 0$ (independent of $S,P,Q, P', Q'$) such that for $y \in Q'$ there exists $x \in P'$ with $|d(x,v)-d(y,v)| \leq \lambda$ for all $v \in S$, then $P$ is a translate of $Q$.
	\item[(2)] Whenever a hyperplane $H$ separates $x$ and $y$ in $\Aff$ there is a $z \in H$ such that $d(x,y) = d(x,z) + d(z,y)$.
	\end{labeling}
\end{Definition}

For a reformulation of \ref{Def: Weyl compatible metric} (1) see also the first item of Definition 3.1 in \cite{Axioms}. 
Weyl--compatibility ensures that two sector panels of the same type are parallel if and only if they are translates of one another.

%BSS S29

We follow \cite{Axioms} and define a class of generalized affine buildings as spaces constructed from a model space $\A$ which satisfy some extra conditions. It was shown in \cite{Axioms} Chapter 10 that the euclidean metric on $\R^n$ is Weyl--compatible for any $n$. So the class of generalized affine buildings defined below includes all euclidean ($\R$-) buildings in the sense of Parreau~\cite{Parreau} and allows in addition for buildings with metrics induced by more general metrics on the apartments.

\begin{Definition}[spaces modeled on $\Aff$]\label{Def: spaces modeled on A}
Let $W_0, W_T$ and $\Aff$ as in \Cref{Def: affine Weyl group}. A \emph{space modeled on $\Aff$} is a pair $(X,\A)$ with $X$ any  set together with an \emph{atlas} $\A$, that is a family of injective \emph{charts} $f \colon \Aff \hookrightarrow X$, satisfying the following axioms:

\begin{labeling}{loong}
\item[{(A1)}] For any $f \in \A$ and $w \in W_T$, $f \circ w \in \A$.
\item[{(A2)}] For any $f,g \in \A$ with $f(\Aff) \cap g(\Aff) \neq \emptyset$, the preimage  $f^{-1}(g(\Aff))$ is closed and convex in $\Aff$ and $\restr{f}{f^{-1}(g(\Aff))} = \restr{(g \circ w)}{f^{-1}(g(\Aff))}$ for some $w \in W_T$.
\item[{(A3)}] For any pair of points  $x,y \in X$ there exists $f \in \A$ such that $x,y \in f(\Aff)$.
\end{labeling}
The sets $f(\Aff)$, for $f\in\A$ are called \emph{apartments} of $X$. 
\end{Definition}

In particular the axioms imply that $X$ is as a set the union of its apartments and transition maps are given by elements of $W_T$. Therefore the notion of Weyl chambers and their germs and types makes sense for $X$.

%BSS S6
%Parreau S7

\begin{Definition}[germs of Weyl simplices]\label{Def: germ}
	Let $P,Q$ be Weyl simplices in $\Aff$ or in $X$ both based at some point $x$.
	We say that $P$ and $Q$ \emph{share the same germ} if $P \cap Q$ is a neighborhood of $x$ in $P$ and $Q$.
	Moreover, $(P \cap Q)\setminus \set{x} \neq \emptyset$.\newline 	
	This is an equivalence relation on the set of $x$-based Weyl simplices, and we denote the equivalence class of $P$, called the \emph{germ of $P$ in $x$} by $P_x$.
	The set $\D_x$ of all $x$-based germs is ordered by inclusion.\newline
	%BSS S7	
\end{Definition}

In an apartment $f(\Aff)$, the set of $x$-based Weyl simplices is isomorphic to $\S(W_0,S)$ and two $x$-based germs are \emph{opposite} if they are contained in a common apartment and are images of one another under the opposition involution. For Weyl chambers this means they are images of one another under the (unique) longest element $w_0$ of the spherical Weyl group. Two Weyl chambers (or simplices) are \emph{opposite at $x$} if their germs are opposite.	

We say that a germ $S_x$ of a Weyl chamber $S$ based at $x$ is \emph{contained in an apartment $A$} if and only if there exists some $\varepsilon >0$ such that 
\[
S\cap (B_\varepsilon(x)\setminus{\{x\}}) \subset A, 
\]
where $B_\varepsilon(x)$ denotes the $\varepsilon$-ball around $x$. 

We may now define generalized affine buildings. 

\begin{Definition}[generalized affine buildings]\label{Def: generalized building}
A space modeled on $\Aff$ is a \emph{generalized affine building} if the following two axioms are satisfied. 
\begin{labeling}{loong}
\item[{(GG)}] Any two germs of Weyl chambers based at the same vertex are contained in a common apartment.
\item[{(CO)}] Any two opposite Weyl chambers based at the same vertex are contained in a unique apartment.
\end{labeling}
\end{Definition}

\begin{remark}[geometric meaning of a germ]
One can think of the germs in $x$ as the set of local (initial) directions at $x$.
Then {(GG)} essentially says that any pair of possible combinatorial directions is represented in some apartment. In fact the set of germs of Weyl chambers based at a point $x$ form the set of chambers of a spherical building $\D_x$, the \emph{building of germs at $x$}. This is is the analogue of a star or link of a vertex in the simplicial setting. For details see Theorem 5.17 in~\cite{Thesis}. 
\end{remark}

\begin{remark}[axiom (A4)]
	If $X$ is a generalized affine building, then by the equivalences shown in \cite{Axioms}, also the following property holds:
\begin{labeling}{loong}
\item[(A4)] 	Given two Weyl chambers, there is a pair of sub-Weyl chambers contained in a common apartment.
\end{labeling}
This axiom is part of the original definition in \cite{Tits1986}.
\end{remark}

%%%%%%%%%%%%%%%%%%%%%%%%
\subsection*{Spherical building at infinity}

We spend the rest of this section to explain that the structure at infinity of a generalized affine building is a simplicial spherical building. Proofs can be found in \cite{Axioms} Chapter 9.

\begin{Definition}[parallel Weyl simplices]\label{Def: parallel simplices}
	Let $(X, \A)$ be a generalized affine building.
	Two Weyl simplices $P$ and $Q$ are \emph{parallel} if there is a sequence $P = P_0, P_1, \dots, P_n = Q$ such that $P_{i-1}$ and $P_{i}$ are translates of each other in some apartment for $i = 1, \dots, n$.
\end{Definition}

In the light of $(A4)$ this means that two Weyl chambers are parallel if and only if they have common sub-Weyl chamber and that such a sequence can always be chosen with $n \leq 3$.

Note that Weyl simplices are defined with respect to a certain (fixed) atlas $\A$ of $X$, and so is parallelism.
It is easy to see that parallelism is an equivalence relation and we denote the parallel-class of a Weyl simplex $P$ by $\partial_\A P$ or simply $\partial P$ and the set of all parallel classes of Weyl simplices in $X$ by $\partial_\A X$ or $\partial X$.

\begin{remark}[parallelism in case of metric buildings]
In case the building $X$ is equipped with a Weyl--compatible metric $d$ one can show that two Weyl simplices are parallel if and only if they have finite Hausdorff-distance with respect to $d$.
\end{remark}

For a proof of the following theorem see \cite[Prop. 5.7]{Thesis} or \cite{Bennett90}. 

\begin{Theorem}[the spherical building at infinity]\label{Thm: sB at infinity}
Let $(X,\A)$ be a generalized affine building.
The set of parallel classes of Weyl simplices  $\partial_\A X$ of a generalized affine building $(X, \A)$ of type $W_T=W_0 \ltimes T$ is a spherical (simplicial) building of type  $W_0$. We call $\partial_\A X$ the \emph{spherical building at infinity} of $X$. The apartments in $\partial X$ are in bijection with the apartments in $X$.
\end{Theorem}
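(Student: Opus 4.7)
My plan is to endow $\partial_\A X$ with the structure of a simplicial complex whose simplices are the parallel classes $\partial P$ of Weyl simplices $P \subset X$, with face relation defined by $\partial P \fo \partial Q$ iff some representatives in a common apartment satisfy $P' \fo Q'$, and whose candidate apartments are the subsets $\partial A := \set{\partial P \mid \text{some representative of } \partial P \text{ lies in } A}$ for $A \in \A$. I then have to verify the usual simplicial building axioms (each $\partial A$ is a Coxeter complex of type $W_0$, any two simplices lie in a common apartment, and pairs of apartments are isomorphic rel. any two common simplices) from Chapter IV of Brown.

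For the first axiom I would argue inside the model $\Aff$ that two Weyl simplices are parallel precisely when they differ by an element of $T$: one direction is immediate from \Cref{Def: parallel simplices}, and the other follows from $(A4)$ together with Weyl--compatibility in the metric case, or more directly from the fact that in $\Aff$ any two Weyl simplices of the same type with a common sub-Weyl-simplex are $T$-translates. Hence the parallel classes in $\partial A$ are in canonical bijection with $W_T/T \isom W_0$-orbits of a fundamental Weyl simplex, giving the Coxeter complex $\S(W_0, S)$, and the face relation transports because $P \fo Q$ in $A$ implies $t(P) \fo t(Q)$ for every $t \in T$.

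For the second axiom (common apartment), given $\partial P, \partial Q \in \partial_\A X$ I would choose arbitrary representatives and extend them to Weyl chambers $S_P \supseteq P, S_Q \supseteq Q$. Applying $(A4)$ yields sub-Weyl-chambers $S_P' \fo S_P$ and $S_Q' \fo S_Q$ which lie in a common apartment $A$. Since $\partial S_P' = \partial S_P$ and $\partial S_Q' = \partial S_Q$ by the parallelism description above, and since $\partial P \fo \partial S_P$ and $\partial Q \fo \partial S_Q$, both classes are represented in $A$, so $\partial P, \partial Q \in \partial A$. For the third axiom, if $\partial P, \partial Q \in \partial A_1 \cap \partial A_2$, pick representatives $P_i, Q_i \subset A_i$; using $(A4)$ one can pass to sub-Weyl-simplices $P', Q'$ lying in $A_1 \cap A_2$. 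By $(A2)$ the transition $f_1^{-1} \circ f_2$ agrees on this overlap with some $w \in W_T$; its projection $\bar w \in W_T/T \isom W_0$ induces the required isomorphism $\partial A_2 \to \partial A_1$. Because $w$ maps $P', Q'$ to $T$-translates of themselves (they represent the same parallel classes on both sides), $\bar w$ fixes $\partial P$ and $\partial Q$ pointwise. The apartment bijection follows from $(CO)$: a pair of opposite Weyl chambers in $\partial A$ has germs at any base point that determine $A$ uniquely.

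The main obstacle is the second step, and within it the identification of parallelism with $T$-translation: the axioms only demand parallelism in a chain of bounded length, whereas the Coxeter-complex structure on $\partial A$ requires a single translation. Bridging this gap is really a statement about the interplay between the translation lattice $T$ and the face relation on Weyl simplices, and it is the place where the hypothesis $W_T = W_0 \ltimes T$ (as opposed to a more exotic affine Weyl group acting on $\Aff$) is used in earnest. Once this is in hand, $(A4)$ and $(GG)$ supply the combinatorial alignments needed to finish both (B1) and (B2).
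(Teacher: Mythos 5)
You should first be aware that the paper does not prove this theorem itself: it is quoted with a pointer to \cite[Prop.~5.7]{Thesis} and to \cite{Bennett90}, and only the final assertion (the bijection between apartments of $X$ and of $\partial X$) is established in the paper, inside the proof of \Cref{Thm: splitting}, where (A4) and (CO) are used to show that a pair of opposite chambers at infinity determines a unique apartment of $X$. Your outline follows the same route as those references --- parallel classes as simplices, the sets $\partial A$ for $A \in \A$ as apartments, verification of Brown's axioms --- so in structure it is the intended argument.

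The step you yourself single out as ``the main obstacle'' is, however, a genuine gap, and the bridges you propose do not close it. What is actually needed is: (i) if two Weyl simplices lying in one apartment $A$ are parallel in $X$ --- via a chain that may leave $A$ --- then they are already translates of one another \emph{inside} $A$, so that $\partial A \isom \S(W_0,S)$ with no unwanted identifications; and (ii) the face relation descends to parallel classes, i.e.\ if $P \fo Q$ and $Q'$ is parallel to $Q$, then $Q'$ has a face of the same type parallel to $P$. Your ``more direct'' argument (two Weyl simplices with a common sub-Weyl-simplex are $T$-translates) only handles a single link of the parallelism chain within one apartment; it does not exclude that a chain leaving $A$ and returning lands on a non-translate. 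Likewise (A4) produces common sub-Weyl chambers only for full Weyl chambers, not for lower-dimensional Weyl simplices, which is exactly why condition~(1) of \Cref{Def: Weyl compatible metric} is imposed: it is the tool that forces parallel sector faces of the same type to be translates. Closing the gap requires combining (A2) (transition maps lie in $W_T$, so their linear parts lie in $W_0$ and preserve types of directions), (GG), and that compatibility condition; this is the content of the lemmas leading up to \cite[Prop.~5.7]{Thesis}. Your diagnosis that the difficulty resides in the hypothesis $W_T = W_0 \ltimes T$ is only a small part of this. Finally, for injectivity of $A \mapsto \partial A$ you need not merely that opposite Weyl chambers span a unique apartment via (CO), but also that an apartment is recovered from its boundary (e.g.\ as the union of the Weyl chambers representing chambers of $\partial A$); as written, your last sentence assumes this.
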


Note that $\partial_\A X$ does depend on the choice of $\A$, but not on $T$. Sometimes $X$ may carry more than one possible atlas, say $\A\neq\A'$, and in this case $\partial_\A X\neq \partial_{\A'} X$ may hold.

%%%%%%%%%%%%%%%%%%%%%%%%%%%%%%
\section{Proofs of \Cref{Thm: main thm} and \Cref{Thm: splitting}}\label{sec: main proof}

This section contains the proofs of our main theorems.  
We will be working with generalized affine buildings whose apartments are copies of $\R^n$ for some $n$.
However, we do not require them to be equipped with the euclidean metric.

We omit proofs for the part about $\partial X$, as they follow directly from \Cref{Thm: sB at infinity}.

\begin{remark}[the degenerate case]
We have already discussed the degenerate spherical case in Remark~\ref{rem:deg-spher}.
Observe that the affine versions of the splitting theorem also holds in the degenerate (affine) case.
Here a single flat of dimension $k$ ``splits" as a product of a building of trivial type, with one apartment isomorphic to $R^0$, having a thick building of type $(\{\id\}, \emptyset)$ at infinity  and an euclidean factor of dimension $k$. Here the building at infinity exists only in a combinatorial sense and not metrically. This mimics the corresponding ``splitting" into $\{\emptyset\}$ and an $\Sp^{k-1}$ we have discussed in \ref{rem:deg-spher}.   	
\end{remark}

Let us recall the statement of our first main theorem. 

\begin{mainthmA}[Extension]
	Let $(\bar{X},\bar{\A})$ be a generalized affine building with affine Weyl group $\bar{W}_0 \ltimes \bar{T}$ and suppose that the spherical building $\partial \bar{X}$ of type $\bar{W}_0$ at infinity is thick.   
	Then for any spherical Coxeter group $W_0$ of rank $n + k$, with $k\geq 0$, that contains the rank $n$ group $\bar{W}_0$ as a reflection subgroup, there exists a generalized affine building $(X,\A)$ in which $(\bar{X},\bar{\A})$ embeds such that $\partial X$ reduces to $\partial \bar{X}$. I.e. $\partial \bar{X}$ is (as a simplicial complex) isomorphic to the thick frame of the spherical building $\partial X$.  
\end{mainthmA}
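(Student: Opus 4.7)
The construction $(X, \A)$ with $X = \bar{X} \times \R^k$ and $\A = \set{(f \times \id_{\R^k}) \circ w \mid f \in \bar{\A}, w \in W_T}$ is prescribed in the paper, so my plan is to verify the axioms of a generalized affine building for this pair and then to identify the thick frame of $\partial X$ with $\partial \bar{X}$; the embedding $\bar{X} \hookrightarrow X$ is the evident $\bar{x} \mapsto (\bar{x}, 0)$, which sends an apartment $\bar{A}$ of $\bar{X}$ into the apartment $\bar{A} \times \R^k$ of $X$.

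First I will check the axioms (A1)--(A3) of \Cref{Def: spaces modeled on A}. Because every $w \in W_T$ is a bijection of $\R^{n+k}$, every chart in $\A$ has image $\bar{A} \times \R^k$ for some apartment $\bar{A}$ of $\bar{X}$, so apartments of $X$ are in natural bijection with apartments of $\bar{X}$. Axiom (A1) is built into the definition of $\A$. For (A2), the intersection $(\bar{A}_1 \times \R^k) \cap (\bar{A}_2 \times \R^k) = (\bar{A}_1 \cap \bar{A}_2) \times \R^k$ pulls back to a closed convex subset of $\R^{n+k}$ by (A2) for $\bar{X}$, and the transition map restricts to the product of a $\bar{W}_0 \ltimes \bar{T}$-element with $\id_{\R^k}$, which lies in $W_T$ because $\bar{W}_0 \hookrightarrow W_0$ is a reflection subgroup and $\bar{T} \oplus \set{0_k} \subseteq T$ by hypothesis. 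Axiom (A3) follows from (A3) for $\bar{X}$ applied to the $\bar{X}$-projections of any two points of $X$.

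For (GG) and (CO), the key structural fact provided by \Cref{geom splitting} is that every $\bar{W}_0$-wall in $\R^{n+k}$ has product form $\bar{H} \times \R^k$, so every $W_0$-Weyl chamber is contained in a unique $\bar{W}_0$-Weyl chamber of the form $\bar{C} \times \R^k$. Given two germs $S_x, T_x$ of $W_0$-Weyl chambers based at $x = (\bar{x}, v)$, each determines a $\bar{W}_0$-Weyl chamber germ $\bar{C}_{\bar{x}}$, respectively $\bar{D}_{\bar{x}}$, at $\bar{x}$ in $\bar{X}$ via the projection $X \to \bar{X}$. Applying (GG) in $\bar{X}$ yields an apartment $\bar{A}$ containing both germs; since every transition between charts lies in $W_T$ it preserves the $W_0$-wall arrangement, so the $W_0$-subdivision of $\bar{C} \times \R^k$ read from the new chart agrees near $x$ with the one from the original chart, and thus $\bar{A} \times \R^k$ contains both $S_x$ and $T_x$. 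For (CO), existence follows from (GG); for uniqueness one observes that opposite $W_0$-germs at $x$ reduce to opposite $\bar{W}_0$-germs at $\bar{x}$, since the spherical building of germs at $x$ is a $W_0$-subdivision of the spherical join of $\Sp^{k-1}$ with the spherical building of germs at $\bar{x}$ in $\bar{X}$, in which opposition factors through the $\bar{X}$-factor. Then (CO) in $\bar{X}$ supplies a unique $\bar{A}$ and hence the unique apartment $\bar{A} \times \R^k$ of $X$.

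To identify the thick frame of $\partial X$ with $\partial \bar{X}$, I will classify the walls of $\partial X$ by the type of $W_0$-wall in $X$ they come from, using \Cref{Lem: walls} to pass to a single panel per wall. Panels lying on a $\bar{W}_0$-wall $\bar{H} \times \R^k$ are thick: the thickness of $\partial \bar{X}$ yields at least three apartments of $\bar{X}$ sharing $\bar{H}$, which lift to at least three apartments of $X$ sharing $\bar{H} \times \R^k$. Panels lying on any other $W_0$-wall $H_0$ are thin: by \Cref{geom splitting} $H_0$ does not contain the $\R^k$-factor, so its projection to $\R^n$ is surjective, and any apartment intersection $(\bar{A} \cap \bar{A}') \times \R^k$ containing $H_0$ forces $\bar{A} = \bar{A}'$, leaving only the two chambers in a single apartment across the panel. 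Hence the subgroup of $W_0$ generated by reflections in thick walls of $\partial X$ is exactly $\bar{W}_0$, the thin-classes of chambers of $\partial X$ correspond to $\bar{W}_0$-Weyl chamber classes at infinity of $X$, and these are in bijection with chambers of $\partial \bar{X}$; adjacency and apartment compatibility then assemble this into the desired simplicial isomorphism via \Cref{Thm: Scharlau}. The main obstacle I anticipate is the careful verification of (GG) together with the \emph{opposition reduces to opposition} step used in the uniqueness part of (CO): both rest on showing that the $W_0$-wall arrangement inside an apartment is encoded intrinsically by the $W_T$-action so that the $W_0$-subdivision transfers cleanly across apartments sharing a $\bar{W}_0$-germ, and on understanding the local building at $x$ as a subdivided spherical join with an $\Sp^{k-1}$-factor.
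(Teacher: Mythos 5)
Your verification of (A1)--(A3), (GG) and (CO) follows essentially the same route as the paper's proof: normalize charts to the form $\bar f\times\id_{\R^k}$, observe that apartment intersections split as $\bar C\times\R^k$, and reduce (GG)/(CO) to the corresponding axioms in $\bar X$ via the containment of every $W_0$-Weyl chamber in a $\bar W_0$-Weyl chamber of product form (you are in fact somewhat more explicit than the paper about why opposition of germs in $X$ descends to opposition in $\bar X$). The paper then omits the identification of the thick frame of $\partial X$ with $\partial\bar X$; you attempt it, and this is where your argument has a genuine flaw.

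You assert that a $W_0$-wall $H_0$ which is not a $\bar W_0$-wall ``does not contain the $\R^k$-factor, so its projection to $\R^n$ is surjective,'' and deduce $\bar A=\bar A'$. Neither claim is right. For $k=0$ every wall contains $\R^0$ and no hyperplane surjects onto $\R^n$ (e.g.\ the short-root walls for $A_1\times A_1\hookrightarrow C_2$); for $k>0$ take the long roots of the $C_2$-factor as $\bar W_0=A_1\times A_1\hookrightarrow W_0=C_2\times A_1$, where the short-root walls contain $U\cong\R^1$ yet are not $\bar W_0$-walls. Moreover, a panel of $\partial X$ is represented by a sector panel, not a whole hyperplane, so two apartments sharing the panel need not satisfy $\bar A=\bar A'$ --- and thinness does not require uniqueness of the ambient apartment. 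The correct mechanism is that a panel lying on a non-$\bar W_0$-wall has its relative interior in the interior of a single $\bar W_0$-Weyl chamber $\bar S\times\R^k$, so the two $W_0$-chambers adjacent to it lie in that same $\bar W_0$-chamber, and every apartment of $X$ containing a sub-sector-panel of the panel contains a sub-Weyl-chamber of $\bar S\times\R^k$, hence the same two chambers of $\partial X$. Equivalently, apartment by apartment $\partial X$ is the subdivided $k$-fold suspension of $\partial\bar X$, so one may invoke \Cref{Thm: Scharlau} and the subdivided-suspension theorem directly. Your thickness argument for the product walls $\bar H\times\R^k$ is fine.
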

\begin{proof}[Proof of \Cref{Thm: main thm}]
	
By construction, apartments in $X$ are of the form $f(\bar{\Aff}) \times \R^k = (f \times \id_{\R^k})(\Aff)$ with $\Aff$ the affine space associated with $\R^{n + k} \isom V_{W}$.
Since the elements of $\bar{W}_0$ correspond by construction to the elements $w \times \id_{\R^{k}}$ in $W_0$, axioms {(A1)} and {(A3)} are satisfied.

In particular $\A$ is invariant under pre-composition of its elements by the elements of $W_T$ by construction.
We can therefore without loss of generality assume that the elements of $\A$ are of the form $f = \bar{f} \times \id_{\R^k}$ by pre-composing with suitable elements of $W_T$, for example the inverse of a representative of the coset $(W_0 \times \id_{\R^{k}})(w,t)$.
		
Suppose $C := f(\Aff) \cap g(\Aff) \neq \emptyset$.
By pre-composing suitable (and possibly different) elements of $W_T$ we can assume $f = \bar{f} \times \id_{\R^k}$ and $g = \bar{g} \times \id_{\R^k}$ thus $C$ splits as a product $C = \bar{C} \times \R^k$, with $\bar{C}$ some non-empty and thus closed convex set.
Hence $\restr{f}{\bar{C}}$ and $\restr{g}{\bar{C}}$ differ by some element of $\bar{W}$ and thus axiom {(A2)} follows.

By construction a Weyl chamber in $X$ is a subset of $\bar{S} \times \R^k$ where $\bar{S}$ is a Weyl chamber  in $\bar{X}$.
Suppose $S$ and $T$ are opposite Weyl chambers based at the same vertex in $X$, arising from Weyl chambers $\bar{S}$ and $\bar{T}$ in $\bar{X}$.
We can assume that $\bar{S}$ and $\bar{T}$ are based at the same vertex, thus they determine a unique apartment $f(\Aff)$ in $\bar{X}$, and $f(\Aff) \times \R^k$ is the unique apartment in $X$ containing $S$ and $T$, hence {(CO)}.

Similarly if $S_x$ and $T_x$ are the germs of Weyl chambers $S$ and $T$ based at the same vertex in $X$, arising from Weyl chambers $\bar{S}$ and $\bar{T}$ in $\bar{X}$, we can assume they are based at the same vertex, say $\bar{x}$.
Then $\bar{S}_{\bar{x}}$ and $\bar{T}_{\bar{x}}$ are contained in some apartment $f(\Aff)$ in $X$, hence {(GG)}.
\end{proof}

Our second main result is the following theorem. 

\begin{mainthmB}[Reduction]
	Let $(X,\A)$ be a generalized affine building with affine Weyl group $W_T = W_0 \ltimes T$ with $W_0$ spherical of rank $n$ and suppose the thick frame $\bar{\D}$ of $\partial X$ is of type $\bar{W}$.
	Then $X$ splits as a product $X = \bar{X} \times \R^k$ where $\bar{X}$ is a generalized affine building with $\partial \bar{X} = \bar{\D}$ and $k = \rk(\partial X) - \rk(\bar{\D})$.
	
	If  in addition $\bar{T} := T \cap ( \R^{n-k} \oplus \set{0_k} )$ is $\bar{W}$-invariant, then $\bar{X}$ is a generalized affine building with affine Weyl group $\bar{W}_0 \ltimes \bar{T}$ and $X$ can be constructed from $\bar{X}$ as in \Cref{Thm: main thm}.
\end{mainthmB}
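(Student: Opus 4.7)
The plan is to construct $\bar X$ by factoring out a globally coherent $\R^k$-direction read off from the thin walls at infinity, and then to verify the generalized building axioms for the quotient.

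First I apply \Cref{Thm: Scharlau} to the spherical building $\partial X$: this realizes $\bar W_0$ as a reflection subgroup of $W_0$, generated by reflections along the thick walls of any apartment of $\partial X$. By \Cref{geom splitting}, the model apartment $V = V_{W_0} \cong \R^n$ decomposes orthogonally as $V = \bar V \oplus U$ with $U$ the $\bar W_0$-fixed subspace of dimension $k$. Pulling this back through the charts gives, in every apartment $A = f(\Aff) \subset X$, a splitting $A = \bar A \oplus U$. The key point is that $U$, regarded as a subspace of $A$ up to translation, is intrinsic to $X$: it is the common direction of the thin walls of $A$, and the partition of walls into thin and thick is inherited from $\partial X$.

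Next I declare $x \sim y$ in $X$ to mean that some apartment contains both points with $y - x \in U$ in that apartment's decomposition, and set $\bar X := X/\!\sim$, with chart set $\bar\A$ obtained by projecting each $f \in \A$ to its $\bar V$-component. The main technical obstacle is to show that $\sim$ is well defined and transitive across overlapping apartments. If two apartments $A, A'$ share a non-empty convex region containing both $x$ and $y$, then by axiom (A2) their transition is the restriction of some $w \in W_T = W_0 \ltimes T$. Although a general $w$ need not fix $U$ pointwise, it must preserve the full arrangement of thin walls (by \Cref{Lem: walls} and \Cref{Lem: sB thick reflection}, combined with the fact that the thin/thick partition of $\partial X$ is an invariant of $X$), hence must send $U$-cosets to $U$-cosets. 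Granting this compatibility, $X$ splits set-theoretically as $\bar X \times U \cong \bar X \times \R^k$.

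To finish, I verify that $(\bar X, \bar \A)$ is a generalized affine building and identify its boundary. Axioms (A1) and (A3) transfer from $X$ immediately; (A2) reduces to the $\bar V$-component of the transition maps, which takes values in the subgroup of the affine group of $\bar\Aff$ generated by $\bar W_0$ and the projection of $T$ to $\bar V$. Axioms (GG) and (CO) follow from their counterparts in $X$ using that each germ of a Weyl chamber at a point of $X$ factors canonically into a germ in $\bar X$ based at the projected point together with the full $U$-direction; the same factorization identifies $\partial \bar X$ with the thick frame $\bar \D$ via \Cref{Thm: sB at infinity}. For the second assertion, when $\bar T := T \cap (\R^{n-k} \oplus \set{0_k})$ is $\bar W_0$-invariant it is a legitimate translation subgroup of $\bar\Aff$, so $\bar W_0 \ltimes \bar T$ is an affine Weyl group and $(\bar X, \bar\A)$ a building over it; applying \Cref{Thm: main thm} to $(\bar X, \bar\A)$ with ambient spherical group $W_0$ and translation group $T$ then reconstructs $(X, \A)$ up to the enlargement of the translation part flagged in \Cref{Rem: convenient}.
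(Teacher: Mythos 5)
Your proposal follows essentially the same route as the paper's proof: apply Scharlau's reduction to $\partial X$ to single out the thick walls and the reflection subgroup $\bar{W}_0$, use \Cref{geom splitting} to split each apartment as $\bar{V}\oplus U$, observe that the $W_T$-transition maps preserve the thin/thick wall structure (so the splitting is coherent across apartments), and then verify the axioms for the resulting factor and invoke \Cref{Thm: main thm} for the converse direction. The only difference is presentational: you make the gluing explicit via the equivalence relation $x\sim y$ and the quotient $X/\!\sim$, where the paper simply asserts that the splitting "occurs apartment-wise"; the compatibility step you flag with "granting this compatibility" is exactly the point the paper also treats briefly, so the two arguments are at a comparable level of detail.
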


\begin{proof}[Proof of \Cref{Thm: splitting}]
Let $S$ and $T$ be representatives of chambers $\partial S$ and $\partial T$ respectively.
Then by axiom (A4) there is an apartment $f(\Aff)$ containing sub-Weyl chambers $S'$ and $T'$ of $S$ and $T$ respectively, and by applying some translation we may without loss of generality assume they are based at the same vertex.
In particular if $\partial S$ and $\partial T$ are opposite chambers in $\partial X$, they determine a unique apartment in $X$, which proves the last part of \Cref{Thm: sB at infinity}.

Similarly if two apartments $f(\Aff)$ and $g(\Aff)$ contain representatives $S_f$ and $S_g$ of a chamber $\partial S$, there is a common sub-Weyl chamber $S \subset f(\Aff) \cap g(\Aff)$, and we may by axiom (A1) and the same argument as in the proof of Theorem A
without loss of generality assume that $f$ and $g$ agree on $f^{-1}(S)$.
If on the other hand $\S$ and $\S'$ are apartments in $\partial X$, then the isomorphism in axiom  {(B2)} in the definition of a (spherical) building can by chosen to be type-preserving, i.e. an element of $W_0$, and since translations are invisible to $\partial X$ (or parallelism) such a splitting must occur apartment-wise.

But in any apartment this is the splitting from \Cref{geom splitting}.
Assuming $T = \R^n$, $\bar{X}$ clearly has the structure of a generalized affine building, that is $f$ and $g$ are of the form $f = \bar{f} \times \id_{\R^{k}}$ and $g = \bar{g} \times \id_{\R^{k}}$, and for arbitrary $T$ our construction for $\bar{T}$ ensures $W\bar{T} \subset T$, which is a sufficient condition to apply the construction of \Cref{Thm: main thm}.
\end{proof}

\section{A metric point of view}\label{sec: metric pov}

In this section we will discuss the connection with the CAT(0) splitting theorem. As before the generalized affine buildings have apartments that are copies of $\R^n$ for some $n \geq 0$ equipped with an arbitrary Weyl--compatible metric $d$ defined in \Cref{Def: Weyl compatible metric}.
Any Weyl--compatible metric $d$ on the model space or an apartment extends to a metric on $X$, which we also denote by $d$.

\begin{Example}[generalized buildings and the CAT(0) property]
Generalized affine buildings are in general not CAT(0) spaces.
One can see that the CAT(0) property fails already on an apartment level. For example, the \emph{maximum norm} $|\cdot|_\infty$ induces a $A_1 \times A_1$ or $C_2$ - invariant (and compatible) metric on $\R^2$ which is not uniquely geodesic and hence not CAT(0).
\end{Example}

If $d$ is the euclidean metric, then Charney and Lytchak show in Proposition 2.3 of \cite{CharneyLytchak2001} that generalized affine buildings are CAT(0) spaces that have the geodesic extension property. 

We will therefore discuss in this section the metric implications of our main theorem in connection with Bridson's CAT(0)-splitting theorem, see Theorem 9.24.  in \cite{BH}. Note that CAT(0)-splittings of Euclidean buildings are also discussed by Kleiner and Leeb in Section 4.3 of \cite{KleinerLeeb}. 

%BH S291
\begin{Theorem}[CAT(0) splitting]
	Let $X$ be a complete CAT(0) space in which all geodesic segments can be extended to (bi-infinite) geodesic lines.
	If $\partial_T X$ is isometric to the spherical join of two non-empty spaces $A_1$ and $A_2$, then $X$ splits as a product $X = X_1 \times X_2$ where $\partial_T X_i = A_i$ for $i = 1,2$.
\end{Theorem}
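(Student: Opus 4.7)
The plan is to follow the classical parallel-set approach originally due to Bridson, constructing the factor subspaces $X_1$ and $X_2$ as the closed convex hulls of the appropriate collections of geodesic lines. The starting point is that the geodesic extension property lets us extend any ray $[x,\xi)$ to a bi-infinite line $\ell$, producing a second endpoint $\xi' \in \partial_T X$ with $d_T(\xi,\xi') \geq \pi$ by a standard CAT(0) convexity argument. The join structure $\partial_T X = A_1 \ast A_2$ then forces $\xi$ and $\xi'$ to lie in the same factor of the join: in the spherical join, any two points in different factors are at Tits distance exactly $\pi/2$, never $\pi$, so $\xi \in A_i$ implies $\xi' \in A_i$.

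Next I would invoke the flat strip theorem: for any geodesic line $\ell$ with both endpoints in $A_1$, the parallel set $P(\ell) := \{y \in X : y \text{ lies on a line parallel to } \ell\}$ is a closed convex subspace that splits isometrically as $\R \times N(\ell)$, with $\partial_T P(\ell) = \{\xi,\xi'\} \ast \partial_T N(\ell)$ inheriting a subjoin structure from $\partial_T X$. I would then define $X_1$ as the closed convex subspace of $X$ obtained by taking the union of all such parallel sets (for lines with both endpoints in $A_1$), and $X_2$ analogously for $A_2$. The identity $\partial_T X_i = A_i$ then follows from the construction combined with the join distance constraints, which prevent any direction outside $A_i$ from being accessible as an asymptotic limit of lines with endpoints in $A_i$.

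The main obstacle, and the technical heart of the argument, is promoting these factor subspaces to a genuine global product $X = X_1 \times X_2$ equipped with the product metric. The key ingredient is the orthogonality of Busemann functions associated to the join: for $\xi \in A_1$ and $\eta \in A_2$, the Tits distance $d_T(\xi,\eta) = \pi/2$ together with the first variation formula for CAT(0) spaces forces $b_\xi$ to be constant along every geodesic asymptotic to $\eta$, and conversely. This lets me define Lipschitz projections $\pi_i \colon X \to X_i$ as suitable level sets of Busemann functions coming from the other factor, and a Pythagoras-style calculation using the CAT(0) inequality and the join formula for $d_T$ then yields $d(x,y)^2 = d(\pi_1(x),\pi_1(y))^2 + d(\pi_2(x),\pi_2(y))^2$ for all $x,y \in X$. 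This upgrades the set-theoretic decomposition to an isometric product, concluding the proof.
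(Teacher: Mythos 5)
First, a point of calibration: the paper does not prove this statement at all --- it is quoted as Bridson's splitting theorem (Theorem II.9.24 of \cite{BH}) and used as a black box in the discussion of the metric point of view, so the only meaningful comparison is with the standard proof in \cite{BH}, which proceeds through the chain of Lemmas II.9.21--9.23 there. Your opening moves match that outline and are essentially sound: the two ends of a line are at Tits distance at least $\pi$, and any point of $A_1 \ast A_2$ at Tits distance $\geq \pi$ from a point of $A_1$ must again lie in $A_1$. (Your stated justification --- ``points in different factors are at distance exactly $\pi/2$'' --- only covers the pure points of the join; the mixed points $(a_1,a_2,\theta)$ have to be excluded by the join distance formula, but that computation does go through.) The appeal to the flat strip theorem for the parallel sets $P(\ell) \isom \R \times N(\ell)$ is also fine.

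There are, however, two genuine gaps. (1) The factors cannot be ``the union (or closed convex hull) of all parallel sets of lines with both endpoints in $A_1$.'' In the eventual product one has $P(\ell) = P_{X_1}(\ell_1) \times X_2$ for a line $\ell$ in an $A_1$-direction, so each such parallel set already contains entire $X_2$-slices; the union over all such $\ell$, let alone its convex hull, is essentially all of $X$ rather than a factor. A factor must be produced as a \emph{cross-section} --- e.g.\ as a joint level set of Busemann functions centred at points of $A_2$, or by the iterated product-decomposition of parallel sets as in \cite{BH} --- and this is precisely the part of the argument your sketch does not supply. (2) The claim that $d_T(\xi,\eta)=\pi/2$ ``together with the first variation formula'' forces $b_\xi$ to be constant along every ray asymptotic to $\eta$ is not a valid deduction: the first variation formula controls $b_\xi$ through the Alexandrov angle at the basepoint, which can differ from the Tits angle. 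On a Euclidean cone of cone angle greater than $2\pi$ (complete, geodesically complete, CAT(0)) one can choose $\xi,\eta$ with $d_T(\xi,\eta)=\pi/2$ and find rays asymptotic to $\eta$ along which $b_\xi$ is strictly monotone. That $b_\xi$ is constant in the $A_2$-directions is essentially equivalent to the splitting itself and is exactly what the global join hypothesis plus geodesic completeness are needed to establish in \cite{BH}; deducing it from a pointwise angle computation begs the question, and the concluding ``Pythagoras-style calculation'' is likewise the crux rather than a routine verification.
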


As pointed out in 6.1 of \cite{CharneyLytchak2001} thin panels of a spherical building are invisible to the metric.
Hence a non-thick spherical building is indistinguishable from the $n$-fold suspension of a thick spherical building.
In the sense of \Cref{Thm: Scharlau}, the non-thick buildings come from some inclusion $W \hookrightarrow W \times (\mod{\Z}{2 \Z})^n$.

The CAT(0) splitting theorem readily allows us the identify $\E^n$ as one of the splitting factors in \Cref{Thm: splitting}, but a priori it is not clear that the second factor carries the structure of a generalized affine building, or even of a space modeled on $\Aff$.
In order to establish such a characterization one necessarily needs at least to find a one-to-one correspondence of apartments as done in the proof of \Cref{Thm: splitting} -- at which point CAT(0) becomes superfluous as the actual metric on the space has no influence on the proof of \Cref{Thm: splitting}.

However the CAT(0) case imposes further restriction on the metric. The metric $d$ necessarily is the product metric $d = \sqrt{d_{\bar{X}}^2+d_\E^2}$ metric, making $\bar{X}$ a convex subset of $X$.

%On behalf of all authors, the corresponding author states that there is no conflict of interest.

% ------------------------------------------------------------------------
\end{document}